\documentclass[11pt]{article}

\usepackage{amsfonts}
\usepackage{amssymb}
\usepackage{amsmath}
\usepackage{a4wide}
\usepackage{graphicx}
\usepackage{setspace}
\usepackage{amsthm}
\usepackage{hyperref}
\usepackage[labelfont=bf]{caption}
\theoremstyle{plain}
\newtheorem{thm}{Theorem}[section] 
\newtheorem{lem}[thm]{Lemma} 
\newtheorem{pro}[thm]{Proposition} 
\newtheorem{rem}[thm]{Remark} 
\newtheorem{co}[thm]{Corollary}

\newtheorem{conj}[thm]{Conjecture}
\makeatletter
\newcommand{\vast}{\bBigg@{3.2}}
\newcommand{\Vast}{\bBigg@{5}}
\makeatother
\begin{document}
\begin{center}
\section*{A density property for stochastic processes}
\subsection*{Riccardo Passeggeri\footnote{\noindent Email: riccardo.passeggeri@gmail.com}}
\subsection*{{\normalfont \large\textit{University of Toronto}}}
\today
\end{center}
\begin{abstract}
	Consider a class of probability distributions which is dense in the space of all probability distributions on $\mathbb{R}^{d}$ with respect to weak convergence, $\forall\,\,d\in\mathbb{N}$, and call it $\Phi$. Then, we construct various explicit classes of continuous (c\'{a}dl\'{a}g) processes, whose fdd belong to $\Phi$, that are dense in the space of all continuous (c\'{a}dl\'{a}g) processes with respect to convergence in distribution. This is motivated by the density of quasi-infinitely divisible (QID) distributions when $d=1$. If this result is extended to any $d\in\mathbb{N}$, then our result will imply that QID processes are dense in both spaces of continuous and c\'{a}dl\'{a}g processes.\\\\
	\textbf{Keywords:} stochastic processes, density property, dense class, quasi-infinitely divisible distributions.\\\\
	\textbf{MSC (2020):} 60G05, 60E05, 60G07, 60B10.
\end{abstract}
\section{Introduction}
This paper focuses on one of the fundamental properties in mathematics: density. Density has a pivotal role in mathematics and its application. For example, consider the Stone-Weierstrass theorem. Since polynomials are among the most tractable functions and computers can directly evaluate them, this theorem has both theoretical and practical relevance. An even more basic example is provided by the real numbers which, when endowed with the usual topology, have the rational numbers as a countable dense subset. We refer to classical books, like the ones of Bourbaki \cite{Bourbaki} and of Steen and Seebach \cite{SteenSseebach}, for further examples and for properties of dense sets.

In this work we present a density property for stochastic processes. To obtain it we proceed as follows. First, we consider any class of probability distributions on $\mathbb{R}^d$ which is dense in the space of all probability distribution on $\mathbb{R}^d$ with respect to weak convergence, for every $d\in\mathbb{N}$. 

Second, we consider the most relevant spaces of stochastic processes. In particular, we focus on stochastic processes with samples paths lying in one of the following three spaces of functions: $C([0,T])$, namely the space of continuous functions on the interval $[0,T]$, endowed with the uniform topology, $D([0,T])$, namely the space of c\'{a}dl\'{a}g functions on $[0,T]$, endowed with the Skorokhod ($J_1$) topology, and $D([0,\infty))$, namely the space of c\'{a}dl\'{a}g functions on $[0,\infty)$, endowed with the Skorokhod ($J_1$) topology. 

Third, we construct various explicit classes of stochastic processes whose finite dimensional distributions belong to the dense class of probability distributions. Each classes belongs to one of the three spaces of stochastic processes. 

Then, our main results state that each of these classes of processes is dense in the respective space of stochastic processes with respect to convergence in distribution.

Thus, our results demonstrate the following property of stochastic processes: ``density of probability distributions translates into density of stochastic processes". We actually show more. In addition to this property, we are also able to construct \textit{explicitly} these dense classes of stochastic processes.

From a distributional point of view, these results imply that any stochastic process can be approximated by an element of these explicit classes presented here. Thus, we do not lose any information when focusing on these classes instead of on the spaces of all stochastic processes.

These results are general and apply to any dense class of probability distributions, but they are also motivated by recent results on a particular class of probability distributions.

The class of infinitely divisible (ID) distributions is one of the most studied classes of probability distributions and their investigation dates back to the works of the father of modern probability: L\'{e}vy, Kolmogorov and De Finetti. Their characteristic function have a unique explicit formulation, called the L\'{e}vy-Khintchine formulation, in terms of three mathematical objects. These are the drift, which is a real valued constant, the Gaussian component, which is a non-negative constant, and the L\'{e}vy measure, which is a measure on $\mathbb{R}$ satisfying an integrability condition and with no mass at $\{0\}$. Gaussian and Poisson distributions are examples of this class.

In 2018, in \cite{LPS} the authors introduce the class of quasi-infinitely divisible (QID) distributions. A random variable $X$ is QID if and only if there exist two ID random variables $Y$ and $Z$ s.t.~$X+Y\stackrel{d}{=}Z$ and $Y$ is independent of $X$. QID distributions posses a L\'{e}vy-Khintchine formulation where the L\'{e}vy measure is now allowed to take negative values too.

One of the main results of \cite{LPS} is the density of QID distributions on $\mathbb{R}$ in the space of all probability distributions on $\mathbb{R}$ with respect to weak convergence. Since this work appeared, there has been a wide and increasing interests in QID distributions from both a theoretical and empirical perspective, see \cite{Passeggeri2} and references therein. In the infinite dimensional setting we have the works \cite{Passeggeri2,Passeggeri,Pass-bim} where the author introduces and investigates QID stochastic processes and QID random measures. In particular, in \cite{Passeggeri2} the author shows that QID completely random measures (CRMs) are dense in the space of all CRMs with respect to both weak and vague convergence.

Currently, various researchers are working on extending the density result of QID distributions from $\mathbb{R}$ to $\mathbb{R}^{d}$. If they succeed, by our results we would obtain that QID stochastic processes are dense in the space of all stochastic processes (with sample paths in $C([0,T])$, in $D([0,T])$, and in $D([0,\infty))$) with respect to convergence in distribution. We state this result after conjecturing that QID distributions are dense in $\mathbb{R}^{d}$.

We stress that our results are general and do not need such conjecture. Our results apply to \textit{any} family of distributions which is dense in the space of all probability distributions on $\mathbb{R}^{d}$, for every $d\in\mathbb{N}$. Moreover, we remark that our focus on the spaces of continuous and c\'{a}dl\'{a}g processes is motivated by the fact that they are the most theoretically studied and empirically used. However, the arguments that we adopt to prove the density results are quite general and, thus, we strongly believe that they can be used to prove density results for further spaces of stochastic processes and with further topologies (like the $S$ topology on the Skorokhod space introduced in \cite{Jakub}, see also \cite{Jakub2}).

The paper is structured as follows. In Section \ref{Sec-Prel}, we introduce preliminaries and notations. In Section \ref{Sec-Res}, we present the density result for stochastic processes with sample paths in $C([0,T])$, in $D([0,T])$ and in $D([0,\infty))$. In Section \ref{Sec-QID}, we apply them to the QID setting.
\section{Preliminaries}\label{Sec-Prel}
In this section we introduce some preliminaries and some of the notations used in the paper. First of all, we point out that in this work we use Billingsley's book \cite{Bill} as main reference, and adopt its notations.

Let $T>0$ and let $D[0,T]$ be the space of real functions on $[0,T]$ that are right-continuous and have left-hand limits. The space $D[0,T]$ is also called the Skorokhod space. Let $\Lambda_T$ denote the class of strictly increasing continuous mappings of $[0,T]$ onto itself. If $\lambda\in \Lambda_T$, then $\lambda0= 0$ and $\lambda T = T$. For $x,y\in D$, define
\begin{equation*}
d_{T}(x,y):=\inf\limits_{\lambda\in\Lambda_T}\Big\{\sup\limits_{t\in[0,T]}|\lambda t-t|\vee \sup\limits_{t\in[0,T]}|x(t)-y(\lambda t)|\Big\}.
\end{equation*}
This is the so-called Skorokhod metric or Skorokhod $J_1$ metric. The space $D[0,T]$ is not complete under $d_T$. We consider in $D[0,T]$ another metric $d_T^{\circ}$, which is equivalent to $d_T$, but under which $D[0,T]$ is complete. Let 
\begin{equation*}
\|\lambda\|_T^{\circ}:=\sup\limits_{0\leq s<t\leq T}\Big|\log\left(\frac{\lambda t-\lambda s}{t-s}\right)\Big|
\end{equation*}
and let 
\begin{equation*}
d^{\circ}_{T}(x,y):=\inf\limits_{\lambda\in\Lambda_T}\Big\{\|\lambda\|_T^{\circ}\vee \sup\limits_{t\in[0,T]}|x(t)-y(\lambda t)|\Big\}.
\end{equation*}
Sometimes we write $d(x,y)$ instead of $d_{T}(x,y)$ (and similarly for $d^{\circ}(x,y)$) when it does not create confusion. 

Further, we let $w'$ be the modified modulus of continuity in $D[0,1]$. Consider a set $\{t_0,...,t_v\}$, for some $v\in\mathbb{N}$, satisfying $0=t_0<t_1<...<t_v=1$ and call it $\delta$-sparse if it satisfies $\min\limits_{1\leq i\leq v}(t_i-t_{i-1}) > \delta$. Define, for $0 < \delta < 1$,
\begin{equation*}
w'(\delta)=w'(x,\delta)=\inf\limits_{\{t_0,...,t_v\}}\max\limits_{1\leq i\leq v}\sup\limits_{s,z\in[t_{i-1},t_i)}|x(s)-x(z)|,
\end{equation*}
where the infimum is taken over all the $\delta$-sparse sets.

Finally, we denote by $\stackrel{fdd}{\to},\stackrel{d}{\to}, \stackrel{p}{\to},\stackrel{a.s.}{\to},$ and $\stackrel{everywhere}{\to}$ the convergence in finite dimensional distributions, distribution, probability, almost surely, and everywhere, respectively, and let $\vee$ stand for maximum and $\wedge$ for minimum.
\section{Results}\label{Sec-Res}
In this section we investigate density results for stochastic processes with sample paths lying in three different spaces. The first space is $C([0,T])$, namely the space of continuous functions on $[0,T]$, endowed with the uniform topology. The second one is $D([0,T])$, namely the space of c\'{a}dl\'{a}g functions on $[0,T]$, endowed with the Skorokhod topology. The third one is $D([0,\infty))$, namely the space of c\'{a}dl\'{a}g functions on $[0,\infty)$, endowed with the Skorokhod topology.

Let $\Phi$ be a family of distributions which are dense in the space of all probability distributions on $\mathbb{R}^{d}$, for every $d\in\mathbb{N}$. We call a $\Phi$ stochastic process a process whose finite dimensional distributions belongs to $\Phi$.

Let use start with a preliminary result which will shorten the proofs of the main results: the density result for $\Phi$ times series with respect to finite dimensional distribution (fdd) convergence.
\begin{pro}\label{0}
The class of $\Phi$ time series is dense in the space of all time series with respect to the fdd convergence.
\end{pro}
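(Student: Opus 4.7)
The plan is to approximate a given time series $X=(X_n)_{n\in\mathbb{N}}$ by $\Phi$ time series $X^{(k)}$ that match longer and longer initial segments of its law and that are extended harmlessly beyond those segments.

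Concretely, I would first reduce FDD convergence to convergence of the initial $k$-dimensional marginals. For each $k\in\mathbb{N}$, let $\mu_k$ denote the joint law of $(X_1,\dots,X_k)$, regarded as a probability measure on $\mathbb{R}^k$. By the density hypothesis on $\Phi$, there exists $\nu^{(k)}\in\Phi\cap\mathcal{P}(\mathbb{R}^k)$ with L\'evy--Prokhorov distance less than $1/k$ from $\mu_k$. I would then construct an approximating time series $X^{(k)}$ by declaring $(X^{(k)}_1,\dots,X^{(k)}_k)\sim\nu^{(k)}$ and defining $X^{(k)}_n$ for $n>k$ in some canonical way (for example as an independent sequence of $\Phi$-distributed scalars, or as a tail of degenerate random variables) chosen so that every finite dimensional distribution of $X^{(k)}$ still lies in $\Phi$.

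The FDD convergence $X^{(k)}\stackrel{fdd}{\to}X$ would then follow from a simple projection argument. Fix any finite $I\subset\mathbb{N}$ and let $k\ge\max I$; the $I$-marginal of $X^{(k)}$ then coincides with the coordinate projection $\pi_I(\nu^{(k)})$. Since the L\'evy--Prokhorov distance does not increase under continuous projections, one has
\[
d_{LP}\bigl(\pi_I(\nu^{(k)}),\mu_I\bigr)=d_{LP}\bigl(\pi_I(\nu^{(k)}),\pi_I(\mu_k)\bigr)\le d_{LP}\bigl(\nu^{(k)},\mu_k\bigr)<\tfrac{1}{k},
\]
and letting $k\to\infty$ yields $(X^{(k)}_i)_{i\in I}\stackrel{d}{\to}(X_i)_{i\in I}$. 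Since $I$ was arbitrary, this is precisely the desired FDD convergence.

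I expect the main obstacle to lie in the extension step: once $\nu^{(k)}$ has been selected on $\mathbb{R}^k$, one must build a full time series around it whose \emph{every} finite dimensional marginal, not just those with indices in $[1,k]$, belongs to $\Phi$. The natural recipes (appending independent $\Phi$-distributed or degenerate coordinates beyond index $k$, or appealing to a Kolmogorov-type consistent extension) require mild closure properties of $\Phi$ under projection and under independent joins with further $\Phi$ or Dirac components; these are enjoyed by the motivating QID class but deserve to be stated explicitly. In the proof I would therefore spell out this extension step carefully, while keeping the density hypothesis on $\Phi$ as the sole structural input wherever possible.
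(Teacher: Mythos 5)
Your proposal is correct and follows essentially the same route as the paper: approximate the first $k$ coordinates by a $\Phi$ law within Prokhorov distance $1/k$, append a degenerate tail, and use the fact that the L\'evy--Prokhorov distance does not increase under coordinate projections to get FDD convergence. The extension concern you flag (whether all finite dimensional marginals of the padded sequence remain in $\Phi$) is also left implicit in the paper, which simply sets the tail to zero and the projection monotonicity is the only step the paper proves in detail.
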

\begin{proof} 	
	Let $X=(X_{t})_{t\in\mathbb{N}}$ be any stochastic process and let $\rho_{n}$ be the Prokhorov metric on measures on $\mathbb{R}^{n}$, for $n\in\mathbb{N}$. 
	Consider the sequence of $\Phi$ processes $Y^{(1)},Y^{(2)},...$ such that 
	\begin{equation*}
Y^{(n)}:=(Y^{(n)}_{1},Y^{(n)}_{2},...,Y^{(n)}_{n},0,...)
	\end{equation*}
	and that 
	\begin{equation*}
\rho_{n}\left(Y^{(n)},(X_{1},...,X_{n}) \right)< \frac{1}{n}.
	\end{equation*}
	This construction is possible thanks to the density of the family of distributions $\Phi$. Then, we have that 
	\begin{equation*}
	\lim\limits_{n\rightarrow\infty}\rho_{n}\left(Y^{(n)},(X_{1},...,X_{n}) \right)=0.
	\end{equation*} 
	
	We need to show that $\rho_{m}\left((Y_{1}^{(n)},...,Y_{m}^{(n)}),(X_{1},...,X_{m}) \right)< \frac{1}{n}$ for every $m\leq n$. However, this is true by definition of Prokhorov metric. In particular, let $\mu$ and $\nu$ be two finite dimensional distributions on $\mathbb{R}^{n}$. Then, by defining
	\begin{equation*}
	\rho_{n} (\mu ,\nu ):=\inf \left\{\varepsilon >0~|~\mu (A)\leq \nu (A^{{\varepsilon }})+\varepsilon \ {\text{and}}\ \nu (A)\leq \mu (A^{{\varepsilon }})+\varepsilon \ {\text{for all}}\ A\in {\mathcal {B}}(\mathbb{R}^{n})\right\}
	\end{equation*}
	we have that
		\begin{equation*}
			\rho_{n} (\mu ,\nu )\geq \inf \{\varepsilon >0~|~\mu (A)\leq \nu (A^{{\varepsilon }})+\varepsilon \ {\text{and}}\ \nu (A)\leq \mu (A^{{\varepsilon }})+\varepsilon \ {\text{for all}}\ A\in {\mathcal {B}}(\mathbb{R}^{n})~s.t.
		\end{equation*}
		\begin{equation*}
		~A=(B\times\mathbb{R}^{n-m}) \ {\text{where}} \ B\in {\mathcal {B}}(\mathbb{R}^{m}) \}
		\end{equation*}
	\begin{equation*}
	= \inf \{\varepsilon >0~|~\mu (A\times\mathbb{R}^{n-m})\leq \nu ((A\times\mathbb{R}^{n-m})^{\epsilon})+\varepsilon \ {\text{and}}\ 
	\end{equation*}
	\begin{equation*}
	\nu (A\times\mathbb{R}^{n-m})\leq \mu ((A\times\mathbb{R}^{n-m})^{\epsilon})+\varepsilon \ {\text{for all}}\ A\in {\mathcal {B}}(\mathbb{R}^{m})\}
	\end{equation*}
	\begin{equation*}
	=\inf \left\{\varepsilon >0~|~\mu_{|_{m}} (A)\leq \nu_{|_{m}} (A^{{\varepsilon }})+\varepsilon \ {\text{and}}\ \nu_{|_{m}} (A)\leq \mu_{|_{m}} (A^{{\varepsilon }})+\varepsilon \ {\text{for all}}\ A\in {\mathcal {B}}(\mathbb{R}^{m})\right\}
	\end{equation*}
	\begin{equation*}
	=\rho_{m} (\mu ,\nu ),
	\end{equation*}
where we used that $(A\times\mathbb{R}^{n-m})^{\epsilon}=A^{\epsilon}\times\mathbb{R}^{n-m}$ and where $\mu_{|_{m}}$ is the finite dimensional distributions on $\mathbb{R}^{m}$ s.t. $\mu(A\times \mathbb{R}^{n-m})=\mu(A)$ for every $A\in\mathcal{B}(\mathbb{R}^{m})$. Notice that such measure $\mu_{|_{m}}$ exists because of the consistency property of the finite dimensional distributions (see the Kolmogorov extension theorem).

Therefore, we have that for every $t_{1},...,t_{k}\in\mathbb{N}$ where $k\in\mathbb{N}$ we have that for every $n\geq\max\{t_{1},....,t_{k}\}$
\begin{equation*}
\rho_{k}\left((Y_{t_{1}}^{(n)},...,Y_{t_{k}}^{(n)}),(X_{t_{1}},...,X_{t_{k}}) \right)< \frac{1}{n},
\end{equation*}
and so for every $t_{1},...,t_{k}\in\mathbb{N}$ we have that
\begin{equation*}
\rho_{k}\left((Y_{t_{1}}^{(n)},...,Y_{t_{k}}^{(n)}),(X_{t_{1}},...,X_{t_{k}}) \right)\to0,
\end{equation*}
as $n\to\infty$. Therefore, we conclude that $Y^{(n)}\stackrel{fdd}{\to} X$.

In this proof we considered the class of stochastic process $(X_{t})_{t\in\mathbb{N}}$ for simplicity. However, the same arguments apply to any discrete parameters stochastic process, like $(X_{t})_{t\in a+b\mathbb{Z}}$, for $a\in\mathbb{R}$ and $b>0$. In particular, in this case we would have the sequence of $\Phi$ processes $a+b\mathbb{Z}$ on such that 
\begin{equation*}
Y^{(n)}:=(...,0,Y^{(n)}_{a+b(-n)},...,Y^{(n)}_{a+b(-1)},Y^{(n)}_{a},Y^{(n)}_{a+b(1)},...,Y^{(n)}_{a+b(n)},0,...)
\end{equation*}
and that
\begin{equation*}
\rho_{2n+1}\left(Y^{(n)},(X_{a+b(-n)},...,X_{a+b(n)}) \right)< \frac{1}{2n+1}.
\end{equation*}
Then, using the arguments above for every $t_{1},...,t_{k}\in\mathbb{Z}$ where $k\in\mathbb{N}$ we would have that for every $n\geq\max\{|t_{1}|,....,|t_{k}|\}$
\begin{equation*}
\rho_{k}\left((Y_{a+b(t_{1})}^{(n)},...,Y_{a+b(t_{k})}^{(n)}),(X_{a+b(t_{1})},...,X_{a+b(t_{k})}) \right)< \frac{1}{2n+1}.
\end{equation*}
\end{proof}

\begin{lem}\label{lem-1}
	Let $d\in\mathbb{N}$. Let $Z^{(n)}$, $n\in\mathbb{N}$, and $Z^{(n)}$ be random vectors on $\mathbb{R}^{d+1}$ such that $Z^{(n)}\stackrel{d}{\to}Z$ as $n\to\infty$. Denote the elements of $Z^{(n)}$ as follows $Z^{(n)}=(Z^{(n)}_{0},Z^{(n)}_{\frac{1}{d}},Z^{(n)}_{\frac{2}{d}},...,Z^{(n)}_{1})$. Let for $t\in[0,1]$
	\begin{equation*}
	\hat{Z}_{t}^{(n)}:=Z^{(n)}_{\frac{\lfloor td\rfloor}{d}}a_{t}+Z^{(n)}_{\frac{\lfloor td\rfloor+1}{d}}b_{t},\quad\text{and}\quad \hat{Z}_{t}:=Z_{\frac{\lfloor td\rfloor}{d}}a_{t}+Z_{\frac{\lfloor td\rfloor+1}{d}}b_{t},
	\end{equation*}
	where $a_{t}:=d(\frac{\lfloor td\rfloor+1}{d}-t)$ and $b_{t}:=d(t-\frac{\lfloor td\rfloor}{d})$. Then, for every fixed $\delta>0$, we have that
	\begin{equation*}
	\sup_{|t-s|<\delta}|\hat{Z}_{t}^{(n)}-\hat{Z}_{s}^{(n)}|\stackrel{d}{\to}	\sup_{|t-s|<\delta}|\hat{Z}_{t}-\hat{Z}_{s}|.
	\end{equation*}
\end{lem}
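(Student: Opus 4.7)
The plan is to realise both sides as the image of $Z^{(n)}$ and $Z$ under one fixed continuous functional $F\colon\mathbb{R}^{d+1}\to\mathbb{R}$, and then invoke the continuous mapping theorem. Concretely, for $z=(z_0,z_{1/d},\ldots,z_1)\in\mathbb{R}^{d+1}$, let $\hat z\colon[0,1]\to\mathbb{R}$ be the piecewise linear interpolation
\begin{equation*}
\hat z_t = z_{\lfloor td\rfloor/d}\, a_t + z_{(\lfloor td\rfloor+1)/d}\, b_t,
\end{equation*}
with $a_t,b_t$ exactly as in the statement, and set
\begin{equation*}
F(z) \;:=\; \sup_{|t-s|<\delta}\bigl|\hat z_t-\hat z_s\bigr|.
\end{equation*}
By construction $F(Z^{(n)})=\sup_{|t-s|<\delta}|\hat Z^{(n)}_t-\hat Z^{(n)}_s|$ and $F(Z)=\sup_{|t-s|<\delta}|\hat Z_t-\hat Z_s|$, so the statement will follow once $F$ is shown to be continuous on $\mathbb{R}^{d+1}$.

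The key step is to establish the Lipschitz bound $|F(z)-F(z')|\le 2\|z-z'\|_\infty$. Since for each $t\in[0,1]$ the value $\hat z_t$ is a convex combination of two coordinates of $z$ (because $a_t,b_t\in[0,1]$ and $a_t+b_t=1$), one has the pointwise estimate $\sup_{t\in[0,1]}|\hat z_t-\hat z'_t|\le\|z-z'\|_\infty$. From the triangle inequality
\begin{equation*}
|\hat z_t-\hat z_s|\;\le\;|\hat z_t-\hat z'_t|+|\hat z'_t-\hat z'_s|+|\hat z'_s-\hat z_s|\;\le\;2\|z-z'\|_\infty+|\hat z'_t-\hat z'_s|,
\end{equation*}
taking $\sup_{|t-s|<\delta}$ on both sides yields $F(z)\le F(z')+2\|z-z'\|_\infty$, and a symmetric argument gives the reverse inequality. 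Hence $F$ is $2$-Lipschitz (in particular continuous) on $\mathbb{R}^{d+1}$.

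Finally, applying the continuous mapping theorem (see, e.g., Theorem 2.7 in Billingsley \cite{Bill}) to the hypothesis $Z^{(n)}\stackrel{d}{\to}Z$ and the continuous map $F$, we obtain $F(Z^{(n)})\stackrel{d}{\to}F(Z)$, which is exactly the asserted convergence. The only subtlety worth watching is the Lipschitz estimate, since the function $t\mapsto\hat z_t$ depends on $z$ in a piecewise-defined way; but the convex-combination structure of $a_t,b_t$ makes this uniform bound essentially automatic, so no deeper obstruction appears.
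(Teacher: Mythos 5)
Your proposal is correct and follows essentially the same route as the paper: both reduce the claim to the continuity of the map $z\mapsto\sup_{|t-s|<\delta}|\hat z_t-\hat z_s|$ on $\mathbb{R}^{d+1}$ via a $2$-Lipschitz estimate in the sup norm, and then invoke the continuous mapping theorem. Your convex-combination argument for $\sup_t|\hat z_t-\hat z'_t|\le\|z-z'\|_\infty$ is a slightly cleaner justification of the step the paper phrases as ``the supremum distance for the difference of linear interpolations is obtained on the grid points,'' but the substance is identical.
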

\begin{proof}
	Fix a $\delta>0$. By the continuous mapping theorem, it is enough to show that the function $g:\mathbb{R}^{d+1}\to[0,\infty)$ defined by $g(x)=\sup_{|t-s|<\delta}|\hat{x}_{t}-\hat{x}_{s}|$ is a continuous function. The function $g$ is well-defined and can be seen as $g=f\circ h$, where $h:\mathbb{R}^{d+1}\to C([0,1])$ ($h:x\mapsto \hat{x}$) and $f:C([0,1])\to[0,\infty)$ ($f:\hat{x}\mapsto\sup_{|t-s|<\delta}|\hat{x}_{t}-\hat{x}_{s}|$). Observe that for every $x,y\in\mathbb{R}^{d+1}$ we have that
	\begin{equation*}
	|g(x)-g(y)|=| \sup_{|t-s|<\delta}|\hat{x}_{t}-\hat{x}_{s}|-\sup_{|t-s|<\delta}|\hat{y}_{t}-\hat{y}_{s}||\leq \sup_{|t-s|<\delta}||\hat{x}_{t}-\hat{x}_{s}|-|\hat{y}_{t}-\hat{y}_{s}||
	\end{equation*}
	\begin{equation*}
	\leq\sup_{|t-s|<\delta}|\hat{x}_{t}-\hat{y}_{t}-\hat{x}_{s}+\hat{y}_{s}|\leq 2\sup_{t}|\hat{x}_{t}-\hat{y}_{t}|= 2\max_{t=0,\frac{1}{d},\frac{2}{d},...,1}|x_{t}-y_{t}|=\|x-y\|_{\infty},
	\end{equation*}	
	where we used the fact the supremum distance for the difference of linear interpolations is obtained on the grid points. Thus, $g$ is uniformly continuous, hence continuous.
\end{proof}
In the following we obtain the density result for the class of processes with paths in $C([0,T])$ endowed with the uniform topology.
\begin{thm}\label{T-C}
Let $T>0$. The class of processes build as linear interpolation of $\Phi$ random vectors is dense in the space of stochastic processes with paths in $C([0,T])$ endowed with the uniform topology with respect to weak convergence.
\end{thm}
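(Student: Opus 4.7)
The plan is to combine two approximations with a diagonal argument: first approximate $X$ by the piecewise-linear interpolation of its discretisations on finer and finer grids, and then, for each fixed grid, approximate that interpolation by the interpolation of a $\Phi$ random vector furnished by the density of $\Phi$. A triangle inequality in the Prokhorov metric on $C([0,T])$ will close the argument.

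For $d\in\mathbb{N}$, I would set $V^{(d)}:=(X_{0},X_{T/d},\ldots,X_{T})$ and let $\hat{X}^{(d)}$ be its piecewise-linear interpolation on the uniform grid $\{0,T/d,\ldots,T\}$. Since every sample path of $X$ is continuous on the compact interval $[0,T]$, hence uniformly continuous, one has $\sup_{t\in[0,T]}|X_{t}-\hat{X}^{(d)}_{t}|\to 0$ almost surely, which gives $\hat{X}^{(d)}\stackrel{d}{\to}X$ in $C([0,T])$ endowed with the uniform topology; in particular, the Prokhorov distance $\rho_{C}(\hat{X}^{(d)},X)$ tends to zero. Fixing $d$ and invoking density of $\Phi$ in the probability measures on $\mathbb{R}^{d+1}$, I would next pick a sequence $Y^{(d,n)}$ of $\Phi$ random vectors with $Y^{(d,n)}\stackrel{d}{\to}V^{(d)}$ as $n\to\infty$. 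The key technical observation, already contained inside the computation in the proof of Lemma \ref{lem-1}, is that the supremum distance between two piecewise-linear interpolations on a common grid is realised at a node; equivalently, the linear-interpolation map $\iota_{d}\colon(\mathbb{R}^{d+1},\|\cdot\|_{\infty})\to(C([0,T]),\|\cdot\|_{\infty})$ is an isometry, hence continuous. By the continuous mapping theorem, $\iota_{d}(Y^{(d,n)})\stackrel{d}{\to}\iota_{d}(V^{(d)})=\hat{X}^{(d)}$ in $C([0,T])$ as $n\to\infty$, so $\rho_{C}(\iota_{d}(Y^{(d,n)}),\hat{X}^{(d)})\to 0$.

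To finish, I would diagonalise: for each $d$ choose $n_{d}$ with $\rho_{C}(\iota_{d}(Y^{(d,n_{d})}),\hat{X}^{(d)})<1/d$, set $Z^{(d)}:=\iota_{d}(Y^{(d,n_{d})})$, and apply the triangle inequality to conclude
\begin{equation*}
\rho_{C}(Z^{(d)},X)\leq \rho_{C}(Z^{(d)},\hat{X}^{(d)})+\rho_{C}(\hat{X}^{(d)},X)\to 0.
\end{equation*}
Each $Z^{(d)}$ is, by construction, the linear interpolation of a $\Phi$ random vector of dimension $d+1$, so this produces the desired approximating sequence in the claimed class. The only real obstacle is the continuity (indeed, isometry) of $\iota_{d}$, which is essentially the node-attained-sup fact already exploited in Lemma \ref{lem-1}; the rest is a routine combination of uniform continuity of continuous paths on a compact interval, the continuous mapping theorem, and the triangle inequality for the Prokhorov metric.
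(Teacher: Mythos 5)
Your proposal is correct, and it takes a genuinely different route from the paper. The paper never works with a metric on the laws on $C([0,T])$ directly: it fixes the dyadic interpolations $\hat{Y}^{(n)}$, proves finite-dimensional convergence to $X$ by hand via the projection property of the Prokhorov metric, and then establishes tightness separately, using the extra moduli-of-continuity constraints \eqref{max-property-Yn} built into the choice of $Y^{(n)}$ (whose simultaneous realisability is what Lemma \ref{lem-1} is for), before invoking Theorem 7.5 of \cite{Bill}. You instead observe that the interpolation operator $\iota_{d}\colon(\mathbb{R}^{d+1},\|\cdot\|_{\infty})\to(C([0,T]),\|\cdot\|_{\infty})$ is an isometric embedding -- the same node-attained-sup computation that the paper only uses inside Lemma \ref{lem-1} -- so that weak convergence of the vectors transfers at once to weak convergence of the interpolated processes in $C([0,T])$; combined with the everywhere-uniform convergence $\hat{X}^{(d)}\to X$ and the fact that the Prokhorov metric on the Polish space $C([0,T])$ metrizes weak convergence, a triangle inequality and a diagonal choice of $n_{d}$ finish the proof with no tightness argument at all. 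Your version is shorter and conceptually cleaner for this particular theorem, precisely because the uniform topology makes $\iota_{d}$ an isometry; the paper's fdd-plus-tightness scheme is heavier here but is the template that carries over to Theorems \ref{T-D} and \ref{T-D-inf}, where the step-function embedding into the Skorokhod space is not an isometry and a direct two-step Prokhorov argument would be harder to run.
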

\begin{proof}
	Let $(X_{t})_{t\in[0,1]}$ be any stochastic process with continuous paths. We focus on the interval $[0,1]$ but the same arguments of the proof apply to any interval $[0,T]$, for $T>0$.
	
	Let $\delta_{m}:=\frac{1}{2^{m}}$, $m\in\mathbb{N}$. For every $n\in\mathbb{N}$, consider a $(2^{n}+1)$-dimensional $\Phi$ random vector 
	\begin{equation*}
	Y^{(n)}:=(Y^{(n)}_{0},Y^{(n)}_{\frac{1}{2^{n}}},Y^{(n)}_{\frac{2}{2^{n}}},...,Y^{(n)}_{1})
	\end{equation*}
	such that
	\begin{equation*}
	\rho_{2^{n}+1}\left(Y^{(n)},(X_{0},X_{\frac{1}{2^{n}}},...,X_{1}) \right)< \frac{1}{n},
	\end{equation*}
	and 
	\begin{equation}\label{max-property-Yn}
	\max_{m<n}\rho(\sup_{|t-s|<\delta_{m}}|\hat{Y}_{t}^{(n)}-\hat{Y}_{s}^{(n)}|,\sup_{|t-s|<\delta_{m}}|\hat{X}_{t}^{(n)}-\hat{X}_{s}^{(n)}|)<\frac{1}{n},
	\end{equation}
	where
	\begin{equation*}
	\hat{Y}_{t}^{(n)}:=Y^{(n)}_{\frac{\lfloor t2^{n}\rfloor}{2^{n}}}a^{(n)}_{t}+Y^{(n)}_{\frac{\lfloor t2^{n}\rfloor+1}{2^{n}}}b^{(n)}_{t},\quad\textnormal{and}\quad \hat{X}_{t}^{(n)}:=X_{\frac{\lfloor t2^{n}\rfloor}{2^{n}}}a^{(n)}_{t}+X_{\frac{\lfloor t2^{n}\rfloor+1}{2^{n}}}b^{(n)}_{t},
	\end{equation*}
	where $a^{(n)}_{t}:=2^{n}(\frac{\lfloor t2^{n}\rfloor+1}{2^{n}}-t)$ and $b^{(n)}_{t}:=2^{n}(t-\frac{\lfloor t2^{n}\rfloor}{2^{n}})$.

	The existence of $Y^{(n)}$ is ensured by the density of the family of distributions $\Phi$ and by Lemma \ref{lem-1}. Further, observe that $(\hat{Y}_{t}^{(n)})_{t\in[0,1]}$ has continuous paths. 
	
	For any $k\in\mathbb{N}$ and $t_{1},...,t_{k}\in[0,1]$, we now prove that 
	\begin{equation*}
	\rho_{k}\left((\hat{Y}^{(n)}_{t_{1}},...,\hat{Y}^{(n)}_{t_{k}}),(X_{t_{1}},...,X_{t_{k}}) \right)\rightarrow 0,\quad\textnormal{as $n\rightarrow\infty$}.
	\end{equation*}
	Consider the following similar linear piecewise interpolation for $X_{t}$:
\begin{equation*}
\hat{X}_{t}^{(n)}:=X_{\frac{\lfloor t2^{n}\rfloor}{2^{n}}}a^{(n)}_{t}+X_{\frac{\lfloor t2^{n}\rfloor+1}{2^{n}}}b^{(n)}_{t}.
\end{equation*}
By triangular inequality we have that
\begin{equation*}
\rho_{k}\left((\hat{Y}^{(n)}_{t_{1}},...,\hat{Y}^{(n)}_{t_{k}}),(X_{t_{1}},...,X_{t_{k}}) \right)
\end{equation*}
\begin{equation*}
\leq \rho_{k}\left((\hat{Y}^{(n)}_{t_{1}},...,\hat{Y}^{(n)}_{t_{k}}),(\hat{X}_{t_{1}}^{(n)},...,\hat{X}_{t_{k}}^{(n)}) \right)+\rho_{k}\left((\hat{X}^{(n)}_{t_{1}},...,\hat{X}^{(n)}_{t_{k}}),(X_{t_{1}},...,X_{t_{k}}) \right).
\end{equation*}
As mentioned above, we know that 
\begin{equation*}
\rho_{2^{n}+1}\left(Y^{(n)},(X_{0},X_{\frac{1}{2^{n}}},...,X_{1}) \right)< \frac{1}{n},
\end{equation*}
for every $n\in\mathbb{N}$. 

Thus, for every $t_{1},...,t_{k}\in[0,1]$, using the arguments of the proof of Theorem \ref{0}, mainly the ones on the properties of the Prokhorov metric, we have that 
\begin{equation*}
\rho_{2k}\left((Y^{(n)}_{\frac{\lfloor t_{1}2^{n}\rfloor}{2^{n}}},...,Y^{(n)}_{\frac{\lfloor t_{k}2^{n}\rfloor}{2^{n}}},Y^{(n)}_{\frac{\lfloor t_{1}2^{n}\rfloor+1}{2^{n}}},...,Y^{(n)}_{\frac{\lfloor t_{k}2^{n}\rfloor+1}{2^{n}}}),(X_{\frac{\lfloor t_{1}2^{n}\rfloor}{2^{n}}},...,X_{\frac{\lfloor t_{k}2^{n}\rfloor}{2^{n}}},X_{\frac{\lfloor t_{1}2^{n}\rfloor+1}{2^{n}}},...,X_{\frac{\lfloor t_{k}2^{n}\rfloor+1}{2^{n}}}) \right)
\end{equation*}
\begin{equation*}
< \frac{1}{n},
\end{equation*}
for $n$ large enough, namely for $2^{n}+1\geq 2k$, and so it converges to zero as $n\to\infty$.

Furthermore, it is possible to see that, by the continuity of the paths of the process $(X_{t})_{t\in[0,1]}$, we obtain that
\begin{equation*}
\rho_{2k}\left((X_{\frac{\lfloor t_{1}2^{n}\rfloor}{2^{n}}},...,X_{\frac{\lfloor t_{k}2^{n}\rfloor}{2^{n}}},X_{\frac{\lfloor t_{1}2^{n}\rfloor+1}{2^{n}}},...,X_{\frac{\lfloor t_{k}2^{n}\rfloor+1}{2^{n}}}),(X_{t_{1}},...,X_{t_{k}},X_{t_{1}},...,X_{t_{k}})\right)\rightarrow0,
\end{equation*}
as $n\rightarrow\infty$. Therefore, by triangular inequality we have that
\begin{equation}\label{conv}
\rho_{2k}\left((Y^{(n)}_{\frac{\lfloor t_{1}2^{n}\rfloor}{2^{n}}},...,Y^{(n)}_{\frac{\lfloor t_{k}2^{n}\rfloor}{2^{n}}},Y^{(n)}_{\frac{\lfloor t_{1}2^{n}\rfloor+1}{2^{n}}},...,Y^{(n)}_{\frac{\lfloor t_{k}2^{n}\rfloor+1}{2^{n}}}),(X_{t_{1}},...,X_{t_{k}},X_{t_{1}},...,X_{t_{k}})\right)\rightarrow0,
\end{equation}
as $n\rightarrow\infty$. Then, by the continuous mapping theorem we obtain that \begin{equation*}
\rho_{k}\left((\hat{Y}^{(n)}_{t_{1}},...,\hat{Y}^{(n)}_{t_{k}}),(X_{t_{1}},...,X_{t_{k}}) \right)\to0,\quad \textnormal{as $n\rightarrow\infty$}.
\end{equation*}

Since $t_{1},...,t_{k}$ were any times in $[0,1]$, we conclude that $(\hat{Y}_{t}^{(n)})_{t\in[0,1]}\stackrel{fdd}{\rightarrow}(X_{t})_{t\in[0,1]}$, as $n\rightarrow\infty$.

Let us now prove tightness. First, observe that for every $\delta>0$ we have that
\begin{equation*}
|\sup_{|t-s|<\delta}|\hat{X}_{t}^{(n)}-\hat{X}_{s}^{(n)}|-\sup_{|t-s|<\delta}|X_{t}-X_{s}||\leq 2\sup_{t}|\hat{X}_{t}^{(n)}-X_{t}|
\end{equation*}
\begin{equation}\label{Xn}
\leq 4 \sup_{|t-s|<2^{-n}}|X_{t}-X_{s}|\stackrel{everywhere}{\to} 0\quad\textnormal{as $n\to\infty$,}
\end{equation}
because $X$ is an element of $C$ (see also the proof of Theorem 7.5 in \cite{Bill}). Thus, also observe that
\begin{equation}\label{delta}
\sup_{|t-s|<\delta}|X_{t}-X_{s}|\stackrel{everywhere}{\to} 0,\quad\textnormal{as $\delta\to 0$.}
\end{equation}
Furthermore, notice that by the definition of $Y^{(n)}$ (in particular property (\ref{max-property-Yn})) and the definition of the L\'{e}vy-Prokhorov metric we deduce that for every $m,n\in\mathbb{N}$ with $m<n$ and every $x\in[0,\infty)$
\begin{equation*}
\mathbb{P}(\sup_{|t-s|<\delta_{m}}|\hat{Y}_{t}^{(n)}-\hat{Y}_{s}^{(n)}|<x)\geq \mathbb{P}(\sup_{|t-s|<\delta_{m}}|\hat{X}_{t}^{(n)}-\hat{X}_{s}^{(n)}|<x-\frac{1}{n})-\frac{1}{n}
\end{equation*}
\begin{equation}\label{Yn}
\geq\mathbb{P}(\sup_{|t-s|<\delta_{m}}|\hat{X}_{t}^{(n)}-\hat{X}_{s}^{(n)}|<x-\frac{1}{\tilde{n}})-\frac{1}{\tilde{n}},
\end{equation} 
for every $\tilde{n}\leq n$.

In order to prove tightness we prove that: for every $\varepsilon$ and $\eta$, there exist $m,n_{0}\in\mathbb{N}$ such that
\begin{equation*}
\mathbb{P}(\sup_{|t-s|<\delta_{m}}|\hat{Y}_{t}^{(n)}-\hat{Y}_{s}^{(n)}|\geq\varepsilon)\leq \eta,\quad n\geq n_{0},
\end{equation*}
which is equivalent to
\begin{equation*}
\mathbb{P}(\sup_{|t-s|<\delta_{m}}|\hat{Y}_{t}^{(n)}-\hat{Y}_{s}^{(n)}|<\varepsilon)\geq 1-\eta, \quad n\geq n_{0}.
\end{equation*}

So fix $\varepsilon$ and $\eta$. Choose $m$ such that $\mathbb{P}(\sup_{|t-s|<\delta_{m}}|X_{t}-X_{s}|<\frac{\varepsilon}{2})\geq 1-\frac{\eta}{2}$. This is possible thanks to (\ref{delta}). Moreover, by (\ref{Xn}) we have that there exists an $n^{*}\in\mathbb{N}$ such that $\frac{1}{n^{*}}<\frac{\eta}{2}$, $\frac{1}{n^{*}}<\frac{\varepsilon}{2}$, and
\begin{equation*}
\mathbb{P}\bigg(\sup_{|t-s|<\delta_{m}}|\hat{X}_{t}^{(n)}-\hat{X}_{s}^{(n)}|<\frac{\varepsilon}{2}+\Big(\frac{\varepsilon}{2}-\frac{1}{n^{*}}\Big)\bigg)+\Big(\frac{\eta}{2}-\frac{1}{n^{*}}\Big)\geq 1-\frac{\eta}{2}
\end{equation*}
\begin{equation*}
\Leftrightarrow\mathbb{P}(\sup_{|t-s|<\delta_{m}}|\hat{X}_{t}^{(n)}-\hat{X}_{s}^{(n)}|<\varepsilon-\frac{1}{n^{*}})-\frac{1}{n^{*}}\geq 1-\eta,
\end{equation*}
for every $n\geq n^{*}$. Finally, let $n_{0}=\max(m+1,n^{*})$, then by (\ref{Yn}) we have that
\begin{equation*}
\mathbb{P}(\sup_{|t-s|<\delta_{m}}|\hat{Y}_{t}^{(n)}-\hat{Y}_{s}^{(n)}|<\varepsilon)\geq 1-\eta,
\end{equation*}
for every $n\geq n_{0}$.

We conclude the proof by applying Theorem 7.5 in \cite{Bill}.
\end{proof}
\begin{rem}
	The proof holds also in the case the partition has size $\frac{1}{n}$ instead of $\frac{1}{2^{n}}$.
\end{rem}

\begin{lem}\label{lem-2}
	Let $d\in\mathbb{N}$. Let $Z^{(n)}$, $n\in\mathbb{N}$, and $Z^{(n)}$ be random vectors on $\mathbb{R}^{d+1}$ such that $Z^{(n)}\stackrel{d}{\to}Z$ as $n\to\infty$. Denote the elements of $Z^{(n)}$ as follows $Z^{(n)}=(Z^{(n)}_{0},Z^{(n)}_{\frac{1}{d}},Z^{(n)}_{\frac{2}{d}},...,Z^{(n)}_{1})$. For $t\in[0,1]$, let $\hat{Z}_{t}^{(n)}:=Z^{(n)}_{\frac{\lfloor td\rfloor}{d}}$ and $\hat{Z}_{t}:=Z_{\frac{\lfloor td\rfloor}{d}}$. Then, for every fixed $\delta>0$, we have that as $n\to\infty$
	\begin{equation*}
	\sup_{\substack{t_{1}\leq t\leq t_{2}\\t_{2}-t_{1}\leq\delta}}|\hat{Z}_{t_{2}}^{(n)}-\hat{Z}_{t}^{(n)}|\wedge|\hat{Z}_{t}^{(n)}-\hat{Z}_{t_{1}}^{(n)}|\stackrel{d}{\to}	\sup_{\substack{t_{1}\leq t\leq t_{2}\\t_{2}-t_{1}\leq\delta}}|\hat{Z}_{t_{2}}-\hat{Z}_{t}|\wedge|\hat{Z}_{t}-\hat{Z}_{t_{1}}|,
	\end{equation*}
	\begin{equation*}
	|\hat{Z}_{\delta}^{(n)}-\hat{Z}_{0}^{(n)}|\stackrel{d}{\to}	|\hat{Z}_{\delta}-\hat{Z}_{0}|,\quad |\hat{Z}_{\frac{d-1}{d}}^{(n)}-\hat{Z}_{1-\delta}^{(n)}|\stackrel{d}{\to}|\hat{Z}_{\frac{d-1}{d}}-\hat{Z}_{1-\delta}|,\quad\text{and}\quad
	\sup_{t}|\hat{Z}_{t}^{(n)}|\stackrel{d}{\to}	\sup_{t}|\hat{Z}_{t}|.
	\end{equation*}
\end{lem}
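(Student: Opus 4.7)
The approach is to follow Lemma \ref{lem-1} and obtain each of the four convergences via the continuous mapping theorem applied to the hypothesis $Z^{(n)}\stackrel{d}{\to}Z$. The key observation, which now replaces the ``supremum of a linear interpolation is attained on the grid'' device of Lemma \ref{lem-1}, is that $\hat{x}_{t}=x_{\lfloor td\rfloor/d}$ is piecewise constant in $t$, so its value depends only on which of the $d+1$ subintervals of the uniform partition of $[0,1]$ contains $t$. Consequently every functional in the statement reduces to a continuous function of the $d+1$ coordinates of $x\in\mathbb{R}^{d+1}$.

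For the first (and only nontrivial) functional, define $g:\mathbb{R}^{d+1}\to[0,\infty)$ by
\begin{equation*}
g(x):=\sup_{\substack{t_{1}\leq t\leq t_{2}\\t_{2}-t_{1}\leq\delta}}|\hat{x}_{t_{2}}-\hat{x}_{t}|\wedge|\hat{x}_{t}-\hat{x}_{t_{1}}|.
\end{equation*}
As $(t_{1},t,t_{2})$ ranges over the indicated triangular region, the integer triple $(\lfloor t_{1}d\rfloor,\lfloor td\rfloor,\lfloor t_{2}d\rfloor)$ takes values in a finite subset $I_{d,\delta}\subset\{0,1,\dots,d\}^{3}$ (namely the triples $(i,j,k)$ with $i\leq j\leq k$ and $(k-i-1)/d\leq \delta$, together with the boundary cases). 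Hence
\begin{equation*}
g(x)=\max_{(i,j,k)\in I_{d,\delta}}\Big(|x_{k/d}-x_{j/d}|\wedge|x_{j/d}-x_{i/d}|\Big),
\end{equation*}
which is a maximum of finitely many $1$-Lipschitz functions of the coordinates of $x$, hence continuous. The continuous mapping theorem then gives the first assertion.

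The remaining three assertions are immediate, since the maps
\begin{equation*}
x\mapsto|x_{\lfloor\delta d\rfloor/d}-x_{0}|,\qquad x\mapsto|x_{(d-1)/d}-x_{\lfloor(1-\delta)d\rfloor/d}|,\qquad x\mapsto\max_{i=0,\dots,d}|x_{i/d}|
\end{equation*}
are plainly continuous functions of $x\in\mathbb{R}^{d+1}$; the continuous mapping theorem applied coordinate-wise finishes the proof. No serious obstacle is expected: the only mildly delicate point is verifying that the index set $I_{d,\delta}$ in the first step is finite, which is transparent because the three integer floors are the only way the continuous variables $(t_{1},t,t_{2})$ enter the functional.
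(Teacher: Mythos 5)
Your proof is correct. You and the paper share the same outer strategy — reduce each claimed convergence to the continuous mapping theorem applied to a functional $g:\mathbb{R}^{d+1}\to[0,\infty)$ — but you establish continuity of the one nontrivial functional $g_{1}$ by a different device. The paper proves directly that $g_{1}$ is $2$-Lipschitz with respect to $\|\cdot\|_{\infty}$, via a case analysis on whether the minimum in $|\hat{x}_{t_{2}}-\hat{x}_{t}|\wedge|\hat{x}_{t}-\hat{x}_{t_{1}}|$ is attained in the same increment for $x$ and for $y$; this mirrors the argument of Lemma \ref{lem-1} and gives a bound uniform in $d$. You instead exploit the piecewise-constant structure of $\hat{x}$ to collapse the supremum over the continuum of triples $(t_{1},t,t_{2})$ to a maximum over the finite set of attainable floor-triples, so that $g_{1}$ is a finite maximum of (Lipschitz) continuous coordinate functions. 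Your route is cleaner and avoids the case analysis entirely, at the cost of being specific to step functions — it would not transfer to the linear-interpolation setting of Lemma \ref{lem-1}, where the paper's Lipschitz template does. Two cosmetic points: your Lipschitz constant should be $2$ rather than $1$ (each $|x_{k/d}-x_{j/d}|$ is $2$-Lipschitz in $\|\cdot\|_{\infty}$), and the precise description of $I_{d,\delta}$ (strict versus non-strict inequality in $(k-i-1)/d\leq\delta$) is immaterial since only finiteness and the identity $g(x)=\max_{I_{d,\delta}}(\cdot)$ are used; neither affects the validity of the argument.
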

\begin{proof}
	Fix a $\delta>0$. By the continuous mapping theorem, it is enough to show that the function $g_{1},g_{2},g_{3},g_{4}:\mathbb{R}^{d+1}\to[0,\infty)$ defined by $g_{1}(x)=\sup_{\substack{t_{1}\leq t\leq t_{2}\\t_{2}-t_{1}\leq\delta}}|\hat{x}_{t_{2}}-\hat{x}_{t}|\wedge|\hat{x}_{t}-\hat{x}_{t_{1}}|$, $g_{2}=|\hat{x}_{\delta}-\hat{x}_{0}|$, $g_{3}=|\hat{x}_{\frac{d-1}{d}}-\hat{x}_{1-\delta}|$, and $g_{4}=\sup_{t}|\hat{x}_{t}|$ are continuous. Let us start with $g_{1}$. Observe that for every $x,y\in\mathbb{R}^{d+1}$ we have the following
	\begin{equation*}
	|g_{1}(x)-g_{1}(y)|=|\sup_{\substack{t_{1}\leq t\leq t_{2}\\t_{2}-t_{1}\leq\delta}}|\hat{x}_{t_{2}}-\hat{x}_{t}|\wedge|\hat{x}_{t}-\hat{x}_{t_{1}}|-\sup_{\substack{t_{1}\leq t\leq t_{2}\\t_{2}-t_{1}\leq\delta}}|\hat{y}_{t_{2}}-\hat{y}_{t}|\wedge|\hat{y}_{t}-\hat{y}_{t_{1}}||
	\end{equation*}
	\begin{equation*}
	\leq\sup_{\substack{t_{1}\leq t\leq t_{2}\\t_{2}-t_{1}\leq\delta}}||\hat{x}_{t_{2}}-\hat{x}_{t}|\wedge|\hat{x}_{t}-\hat{x}_{t_{1}}|-|\hat{y}_{t_{2}}-\hat{y}_{t}|\wedge|\hat{y}_{t}-\hat{y}_{t_{1}}||.
	\end{equation*}
	Now, observe that for each triplet $t_{1}\leq t\leq t_{2}$ there are two possible cases: in the first one the minimum is achieved in the same interval, \textit{e.g.}
	\begin{equation*}\label{case-1}
	|\hat{x}_{t_{2}}-\hat{x}_{t}|\wedge|\hat{x}_{t}-\hat{x}_{t_{1}}|=|\hat{x}_{t_{2}}-\hat{x}_{t}|,\quad\textnormal{and}\quad 	|\hat{y}_{t_{2}}-\hat{y}_{t}|\wedge|\hat{y}_{t}-\hat{y}_{t_{1}}|=|\hat{y}_{t_{2}}-\hat{y}_{t}|,
	\end{equation*}
while in the second case the minimum is not achieved in the same interval, \textit{e.g.}
\begin{equation}\label{case-2}
|\hat{x}_{t_{2}}-\hat{x}_{t}|\wedge|\hat{x}_{t}-\hat{x}_{t_{1}}|=|\hat{x}_{t}-\hat{x}_{t_{1}}|,\quad\textnormal{and}\quad 	|\hat{y}_{t_{2}}-\hat{y}_{t}|\wedge|\hat{y}_{t}-\hat{y}_{t_{1}}|=|\hat{y}_{t_{2}}-\hat{y}_{t}|.
\end{equation}
For the first case, following (\ref{case-1}), we have
\begin{equation*}
||\hat{x}_{t_{2}}-\hat{x}_{t}|\wedge|\hat{x}_{t}-\hat{x}_{t_{1}}|-|\hat{y}_{t_{2}}-\hat{y}_{t}|\wedge|\hat{y}_{t}-\hat{y}_{t_{1}}||=||\hat{x}_{t_{2}}-\hat{x}_{t}|-|\hat{y}_{t_{2}}-\hat{y}_{t}||
\end{equation*}
\begin{equation*}
\leq |\hat{x}_{t_{2}}-\hat{x}_{t}-\hat{y}_{t_{2}}+\hat{y}_{t}|\leq |\hat{x}_{t_{2}}-\hat{y}_{t_{2}}|+|\hat{x}_{t}-\hat{y}_{t}|\leq 2\sup_{t}|\hat{x}_{t}-\hat{y}_{t}|=2\|x-y\|_{\infty}.
\end{equation*}
For the second case, following (\ref{case-2}) and considering w.l.o.g.~that $|\hat{x}_{t}-\hat{x}_{t_{1}}|\geq |\hat{y}_{t_{2}}-\hat{y}_{t}|$ we have
\begin{equation*}
||\hat{x}_{t_{2}}-\hat{x}_{t}|\wedge|\hat{x}_{t}-\hat{x}_{t_{1}}|-|\hat{y}_{t_{2}}-\hat{y}_{t}|\wedge|\hat{y}_{t}-\hat{y}_{t_{1}}||=|\hat{x}_{t}-\hat{x}_{t_{1}}|-|\hat{y}_{t_{2}}-\hat{y}_{t}|
\end{equation*}
\begin{equation*}
\leq |\hat{x}_{t_{2}}-\hat{x}_{t}|-|\hat{y}_{t_{2}}-\hat{y}_{t}|\leq |\hat{x}_{t_{2}}-\hat{x}_{t}-\hat{y}_{t_{2}}+\hat{y}_{t}|\leq2\|x-y\|_{\infty}.
\end{equation*}
Therefore, we have that
	\begin{equation*}
	\sup_{\substack{t_{1}\leq t\leq t_{2}\\t_{2}-t_{1}\leq\delta}}||\hat{x}_{t_{2}}-\hat{x}_{t}|\wedge|\hat{x}_{t}-\hat{x}_{t_{1}}|-|\hat{y}_{t_{2}}-\hat{y}_{t}|\wedge|\hat{y}_{t}-\hat{y}_{t_{1}}||\leq 2\|x-y\|_{\infty}.
	\end{equation*}
	Thus, $g_{1}$ is uniformly continuous.
	
	For $g_{2}$ observe that
	\begin{equation*}
	||\hat{x}_{\delta}-\hat{x}_{0}|-|\hat{y}_{\delta}-\hat{y}_{0}||\leq|\hat{x}_{\delta}-\hat{x}_{0}-\hat{y}_{\delta}+\hat{y}_{0}|\leq 2\|x-y\|_{\infty},
	\end{equation*}
	the same arguments apply to $g_{3}$, while for $g_{4}$ we have
	\begin{equation*}
	|\sup_{t}|\hat{x}_{t}|-\sup_{t}|\hat{y}_{t}||\leq \sup_{t}||\hat{x}_{t}|-|\hat{y}_{t}||\leq \sup_{t}|\hat{x}_{t}-\hat{y}_{t}|=2\|x-y\|_{\infty}.
	\end{equation*}
\end{proof}
In the following result, we present the equivalent density result for the class of processes with paths in $D([0,T])$ endowed with Skorokhod topology.
\begin{thm}\label{T-D}
	Let $T>0$. The class of $\Phi$ processes with stepwise paths with steps of equal length is dense in the space of processes with paths in $D([0,T])$ endowed with Skorokhod topology with respect to weak convergence.
\end{thm}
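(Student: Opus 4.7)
The plan is to mirror the structure of the proof of Theorem \ref{T-C}, replacing the linear interpolation by the piecewise-constant one of Lemma \ref{lem-2}, and replacing the $C[0,T]$ tightness criterion (Theorem 7.5 of \cite{Bill}) by the $D[0,T]$ criterion (Theorem 13.2 of \cite{Bill}), with the four continuous statistics $g_1,g_2,g_3,g_4$ of Lemma \ref{lem-2} playing the role played by the single sup-increment of Lemma \ref{lem-1}. Without loss of generality I work on $[0,1]$. For each $n$, density of $\Phi$ combined with Lemma \ref{lem-2} allows one to pick a $(2^n+1)$-dimensional $\Phi$ vector $Y^{(n)}=(Y^{(n)}_0,Y^{(n)}_{1/2^n},\dots,Y^{(n)}_1)$ satisfying the Prokhorov bound
\begin{equation*}
\rho_{2^n+1}\!\left(Y^{(n)},(X_0,X_{1/2^n},\dots,X_1)\right)<\tfrac{1}{n},
\end{equation*}
and, in addition, such that $g_j(Y^{(n)})$ is within L\'evy-Prokhorov distance $1/n$ of $g_j$ evaluated on the corresponding sampled vector of $X$, for $j=1,2,3,4$ and for each of the finitely many mesh parameters $\delta_m=2^{-m}$, $m<n$, needed below (the exact analogue of property (\ref{max-property-Yn})). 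Define $\hat Y^{(n)}_t:=Y^{(n)}_{\lfloor t2^n\rfloor/2^n}$ on $[0,1]$; this is a right-continuous step function with jumps confined to the dyadic grid, hence a member of the class in the statement.

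For finite-dimensional convergence, I would fix continuity points $t_1,\dots,t_k$ of $X$ and imitate the Prokhorov-restriction argument of Proposition \ref{0} and of Theorem \ref{T-C} to obtain
\begin{equation*}
\rho_k\!\left((Y^{(n)}_{\lfloor t_12^n\rfloor/2^n},\dots,Y^{(n)}_{\lfloor t_k2^n\rfloor/2^n}),(X_{\lfloor t_12^n\rfloor/2^n},\dots,X_{\lfloor t_k2^n\rfloor/2^n})\right)<\tfrac{1}{n}
\end{equation*}
for $n$ large enough. Since $X$ is c\`adl\`ag and each $t_j$ is a continuity point, $X_{\lfloor t_j2^n\rfloor/2^n}\to X_{t_j}$ almost surely as $n\to\infty$; combining this with the above bound by the triangle inequality yields $(\hat Y^{(n)}_{t_1},\dots,\hat Y^{(n)}_{t_k})\xrightarrow{d}(X_{t_1},\dots,X_{t_k})$. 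This is the fdd convergence at continuity points of $X$ that is required for weak convergence in $D[0,1]$.

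For tightness, I would apply Theorem 13.2 of \cite{Bill}, which asks for uniform control (over $n$) of $\sup_t|\hat Y^{(n)}_t|=g_4(Y^{(n)})$, of the modulus $w''(\hat Y^{(n)},\delta_m)=g_1(Y^{(n)})$, and of the boundary oscillations $g_2(Y^{(n)}),g_3(Y^{(n)})$. The transfer from the sampled process $\hat X^{(n)}_t:=X_{\lfloor t2^n\rfloor/2^n}$ to $\hat Y^{(n)}$ is identical to the one used for the modulus of continuity in Theorem \ref{T-C}: by the definition of L\'evy-Prokhorov and the construction of $Y^{(n)}$,
\begin{equation*}
\mathbb{P}\!\left(g_j(Y^{(n)})<x\right)\geq \mathbb{P}\!\left(g_j(\text{sampled }X)<x-\tfrac{1}{\tilde n}\right)-\tfrac{1}{\tilde n},\qquad j=1,2,3,4,
\end{equation*}
for every $\tilde n\leq n$. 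Then, because the sampled process $\hat X^{(n)}$ inherits its oscillatory and boundary behaviour from $X$, and $X$ itself is automatically tight as a single $D[0,1]$-valued random element, the right-hand side can be pushed arbitrarily close to $1$ by choosing $m$ large and then $n$ large, exactly mirroring the $\varepsilon/\eta$ bookkeeping at the end of the proof of Theorem \ref{T-C}. An application of Theorem 13.2 of \cite{Bill} concludes the proof.

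The main obstacle I expect is the comparison between the Skorokhod modulus $w''$ of the sampled process $\hat X^{(n)}$ and that of $X$ itself. In the continuous case, inequality (\ref{Xn}) made the analogous comparison almost automatic because linear interpolation never creates or enlarges oscillations; in the c\`adl\`ag setting, piecewise-constant sampling can shuffle jumps across neighbouring grid intervals, so a triple $t_1\leq t\leq t_2$ with $t_2-t_1\leq\delta$ that realises $w''(\hat X^{(n)},\delta)$ need not come from a triple of continuity points of $X$ at comparable distance. The key step will be the deterministic bound $w''(\hat X^{(n)},\delta)\leq w''(X,\delta+2^{1-n})$, obtained by snapping each $t_j$ to $\lfloor t_j2^n\rfloor/2^n$ and noting that the snapped times differ by at most $\delta+2^{1-n}$ and lie in the continuity set of $X$; together with the boundary analogues for $g_2,g_3$, this reduces the tightness of $\{\hat X^{(n)}\}$ to the trivial tightness of the single process $X$ in $D[0,1]$.
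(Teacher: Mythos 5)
Your proposal is correct and follows the same skeleton as the paper's proof: the same piecewise-constant approximants $\hat Y^{(n)}_t=Y^{(n)}_{\lfloor t2^n\rfloor/2^n}$, the same use of Lemma \ref{lem-2} to make the four statistics $g_1,\dots,g_4$ Prokhorov-close to those of the sampled process, the same L\'evy--Prokhorov transfer of the tightness conditions from $\hat X^{(n)}$ to $\hat Y^{(n)}$, and the same conclusion via Theorems 13.1--13.2 of \cite{Bill}. The one place you genuinely diverge is the step you flag as the ``main obstacle'': controlling the family $\{\hat X^{(n)}\}$ itself. The paper sidesteps any direct modulus comparison by invoking Lemma 3 on page 127 of \cite{Bill}, which gives the deterministic bound $d(\hat X^{(n)},X)\leq 2^{-n}\vee w'_X(2^{-n})\to 0$; hence $\hat X^{(n)}\stackrel{d}{\to}X$ in $D[0,1]$, and since $D$ is Polish under $d^\circ$, this convergent sequence is automatically tight, so the uniform bounds on $g_1,\dots,g_4$ for $\{\hat X^{(n)}\}$ come for free from the equivalent conditions (13.8). (The same convergence also delivers the fdd convergence of $\hat X^{(n)}$ along $T_X$ without a separate pathwise argument.) Your alternative, the pathwise bound $w''(\hat X^{(n)},\delta)\leq w''(X,\delta+2^{1-n})$ obtained by snapping times to the grid, is also valid and arguably more elementary, though one clause is off: the snapped times $\lfloor t_j2^n\rfloor/2^n$ need not be continuity points of $X$. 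This does no harm, since $w''(X,\cdot)$ is a supremum over all triples regardless of continuity, but you should drop that remark. Either route closes the gap; the paper's is shorter because the Skorokhod-distance estimate for step-function sampling is already available as a citable lemma.
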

\begin{proof}
	Let $(X_{t})_{t\in[0,1]}$ be any c\'{a}dl\'{a}g process. We focus on the interval $[0,1]$ but the same arguments of the proof apply to any interval $[0,T]$, for $T>0$. Let $\delta_{m}:=\frac{1}{2^{m}}$, $m\in\mathbb{N}$.
	
	For every $n\in\mathbb{N}$, consider a $(2^{n}+1)$-dimensional $\Phi$ random vector 
	\begin{equation*}
	Y^{(n)}:=(Y^{(n)}_{0},Y^{(n)}_{\frac{1}{2^{n}}},Y^{(n)}_{\frac{2}{2^{n}}},...,Y^{(n)}_{1})
	\end{equation*}
	such that
	\begin{equation}\label{X}
	\rho_{2^{n}+1}\left(Y^{(n)},(X_{0},X_{\frac{1}{2^{n}}},...,X_{1}) \right)< \frac{1}{n},
	\end{equation}
	\begin{equation}\label{max-property-Yn-D}
	\max_{m<n}\rho(\sup_{\substack{t_{1}\leq t\leq t_{2}\\|t-s|\leq\delta_{m}}}|\hat{Y}_{t_{2}}^{(n)}-\hat{Y}_{t}^{(n)}|\wedge|\hat{Y}_{t}^{(n)}-\hat{Y}_{t_{1}}^{(n)}|,\sup_{\substack{t_{1}\leq t\leq t_{2}\\|t-s|\leq\delta_{m}}}|\hat{X}_{t_{2}}^{(n)}-\hat{X}_{t}^{(n)}|\wedge|\hat{X}_{t}^{(n)}-\hat{X}_{t_{1}}^{(n)}|)<\frac{1}{n},
	\end{equation}
	\begin{equation*}
	\max_{m<n}\rho(|\hat{Y}_{\delta_{m}}^{(n)}-\hat{Y}_{0}^{(n)}|,|\hat{X}_{\delta_{m}}^{(n)}-\hat{X}_{0}^{(n)}|)<\frac{1}{n},
	\end{equation*}
	\begin{equation*}
	\max_{m<n}\rho(|\hat{Y}_{\frac{2^{n}-1}{2^{n}}}^{(n)}-\hat{Y}_{1-\delta_{m}}^{(n)}|,|\hat{X}_{\frac{2^{n}-1}{2^{n}}}^{(n)}-\hat{X}_{1-\delta_{m}}^{(n)}|)<\frac{1}{n},
	\end{equation*}
	and
	\begin{equation*}
	\rho(\sup_{t}|\hat{Y}_{t}^{(n)}|,\sup_{t}|\hat{X}_{t}^{(n)}|)<\frac{1}{n},
	\end{equation*}
	where
	\begin{equation*}
	\textnormal{$\hat{Y}_{t}^{(n)}:=Y^{(n)}_{\frac{\lfloor t2^{n}\rfloor}{2^{n}}}\quad$ and $\quad\hat{X}_{t}^{(n)}:=X_{\frac{\lfloor t2^{n}\rfloor}{2^{n}}}$},
	\end{equation*}
	for $t\in[0,1]$. The existence of $Y^{(n)}$	is ensured by the density of the family of distributions $\Phi$ and by Lemma \ref{lem-2}. Notice that $(\hat{Y}_{t}^{(n)})_{t\in[0,1]}$ is a c\'{a}dl\'{a}g process due to the property of the floor function, for every $n\in\mathbb{N}$. Indeed, it is possible to see that for any fixed $n\in\mathbb{N}$ and every $s\in[0,\frac{1}{2^{n}})$ we have $\lfloor t2^{n}\rfloor=\lfloor (t+s)2^{n}\rfloor$. Therefore, we have that $\hat{Y}_{t}^{(n)}=\hat{Y}_{t+s}^{(n)}$. Note that this is not true when we look at the left limit.
	
	Now, let $d$ be Skorokhod metric. By Lemma 3 page 127 in \cite{Bill} we have that
	\begin{equation*}
	d(\hat{X}^{(n)},X)\leq 2^{-n}\vee w'_{X}(2^{-n})\stackrel{everywhere}{\to}0,\quad n\to\infty
	\end{equation*}
	where $w'_{X}(2^{-n})$ is the (modified) modulus of continuity for $D[0,1]$. Let $d^{\circ}$ be the metric that makes $D$ complete and separable. Since $d^{\circ}$ and $d$ are equivalent (see Theorem 12.1 in \cite{Bill}), then $d^{\circ}(\hat{X}^{(n)},X)\to0$ everywhere as $n\to\infty$. This implies by Corollary page 28 in \cite{Bill} that 
	\begin{equation}\label{Xn-to-X}
\hat{X}^{(n)}\stackrel{d}{\to}X,\quad\textnormal{as $n\to\infty$}.
	\end{equation}
	Since $D[0,1]$ is complete and separable under $d^{\circ}$, then by the Prokhorov theorem a family of probability measures on $(D,\mathcal{D})$ is relatively compact if and only if it is tight. Let $T_{X}$ denote the set of $t$ in $[0,1]$ for which the natural projection $\pi_{t}$ is continuous except at point forming a set of $P$-measure 0 where $P$ is the distribution of $X$. Then, (\ref{Xn-to-X}) implies that for every $t_{1},....,t_{k}\in T_{X}$, where $k\in\mathbb{N}$, we have the convergence 
	\begin{equation*}
	\rho_{k}\left((\hat{X}^{(n)}_{t_{1}},...,\hat{X}^{(n)}_{t_{k}}),(X_{t_{1}},...,X_{t_{k}})\right)\to0,\quad\textnormal{as $n\to\infty$}.
	\end{equation*}
	Moreover,by (\ref{X}) and by the properties of the Prokhorov metric, for every $t_{1},...,t_{k}\in[0,1]$, we have that 
	\begin{equation*}
	\rho_{k}\left((\hat{Y}^{(n)}_{t_{1}},...,\hat{Y}^{(n)}_{t_{k}}),(\hat{X}^{(n)}_{t_{1}},...,\hat{X}^{(n)}_{t_{k}}) \right)< \frac{1}{n},
	\end{equation*}
	for $n$ large enough, namely for $2^{n}+1\geq k$, and so it converges to zero as $n\to\infty$. Therefore, by the triangular inequality we obtain that for every $t_{1},....,t_{k}\in T_{X}$
	\begin{equation*}
	\rho_{k}\left((\hat{Y}^{(n)}_{t_{1}},...,\hat{Y}^{(n)}_{t_{k}}),(X_{t_{1}},...,X_{t_{k}}) \right)\to0\quad \textnormal{as $n\rightarrow\infty$}.
	\end{equation*}
	and so $(\hat{Y}_{t}^{(n)})_{t\in[0,1]}\stackrel{fdd}{\rightarrow}(X_{t})_{t\in[0,1]}$, as $n\rightarrow\infty$, for points in $T_{X}$.
	
	Let us now prove tightness. Following the arguments above (\ref{Xn-to-X}) implies the tightness of the family of probability distributions of $X_{n}$, $n\in\mathbb{N}$. Thus, by Theorem 13.2 and by the equivalent conditions (13.8) in \cite{Bill}, (\ref{Xn-to-X}) implies that: for every $\gamma>0$ there exists an $a>0$ and a $n_{0}$ such that
	\begin{equation}\label{condition-1}
	\mathbb{P}(\sup_{t}|\hat{X}_{t}^{(n)}|\geq a)\leq\gamma,\quad n\geq n_{0}
	\end{equation}
	and that for every $\eta>0$ and $\varepsilon>0$ there exists a $m\in\mathbb{N}$ and a $\tilde{n}_{0}$ such that
	\begin{equation*}
	\begin{cases}
	\mathbb{P}(\sup_{\substack{t_{1}\leq t\leq t_{2}\\|t-s|\leq\delta_{m}}}|\hat{X}_{t_{2}}^{(n)}-\hat{X}_{t}^{(n)}|\wedge|\hat{X}_{t}^{(n)}-\hat{X}_{t_{1}}^{(n)}|\geq \varepsilon)\leq\eta,\\
	\mathbb{P}(|\hat{X}_{\delta_{m}}^{(n)}-\hat{X}_{0}^{(n)}|\geq \varepsilon)\leq\eta,\\
	\mathbb{P}(|\hat{X}_{\frac{2^{n}-1}{2^{n}}}^{(n)}-\hat{X}_{1-\delta_{m}}^{(n)}|\geq \varepsilon)\leq\eta,
	\end{cases}
	\end{equation*}
	for $n\geq \tilde{n}_{0}$.
	
	Thus, by Theorem 13.2 and by the equivalent conditions (13.8) in \cite{Bill} in order to show tightness we need to show that these properties are satisfied by $((\hat{Y}_{t}^{(n)})_{t\in[0,1]})_{n\in\mathbb{N}}$. Following the arguments in the proof of Theorem \ref{T-C} we have the following. Fix $\eta$ and $\varepsilon$. Let $m$ and $\tilde{n}_{0}$ be such that 
	\begin{equation*}
	\begin{cases}
	\mathbb{P}(\sup_{\substack{t_{1}\leq t\leq t_{2}\\|t-s|\leq\delta_{m}}}|\hat{X}_{t_{2}}^{(n)}-\hat{X}_{t}^{(n)}|\wedge|\hat{X}_{t}^{(n)}-\hat{X}_{t_{1}}^{(n)}|< \frac{\varepsilon}{2})\geq1-\frac{\eta}{2},\\
	\mathbb{P}(|\hat{X}_{\delta_{m}}^{(n)}-\hat{X}_{0}^{(n)}|< \frac{\varepsilon}{2})\geq1-\frac{\eta}{2},\\
	\mathbb{P}(|\hat{X}_{\frac{2^{n}-1}{2^{n}}}^{(n)}-\hat{X}_{1-\delta_{m}}^{(n)}|< \frac{\varepsilon}{2})\geq1-\frac{\eta}{2},
		\end{cases}
	\end{equation*}	
	for $n\geq \tilde{n}_{0}$. Then, for every $n>\max(m,\tilde{n}_{0},\frac{2}{\varepsilon},\frac{2}{\eta})$
	\begin{equation*}
	\begin{cases}
	\mathbb{P}(\sup_{\substack{t_{1}\leq t\leq t_{2}\\|t-s|\leq\delta_{m}}}|\hat{Y}_{t_{2}}^{(n)}-\hat{Y}_{t}^{(n)}|\wedge|\hat{Y}_{t}^{(n)}-\hat{Y}_{t_{1}}^{(n)}|< \varepsilon)\geq1-\eta,\\
	\mathbb{P}(|\hat{Y}_{\delta_{m}}^{(n)}-\hat{Y}_{0}^{(n)}|< \varepsilon)\geq1-\eta,\\
	\mathbb{P}(|\hat{Y}_{\frac{2^{n}-1}{2^{n}}}^{(n)}-\hat{Y}_{1-\delta_{m}}^{(n)}|< \varepsilon)\geq1-\eta.
	\end{cases}
	\end{equation*}
	Concerning condition (\ref{condition-1}) we have the following. Fix $\gamma$. Let $a>0$ and $n_{0}$ be such that
	\begin{equation*}
	\mathbb{P}(\sup_{t}|\hat{X}_{t}^{(n)}|\geq a)\leq\frac{\gamma}{2},\quad n\geq n_{0}.
	\end{equation*}
	Then, for every $n>\max(n_{0},\frac{2}{\gamma})$ we have
		\begin{equation*}
		\mathbb{P}(\sup_{t}|\hat{Y}_{t}^{(n)}|< a+1)\geq\mathbb{P}(\sup_{t}|\hat{X}_{t}^{(n)}|< a+1-\frac{1}{n})-\frac{1}{n}\geq1-\frac{\gamma}{2}-\frac{1}{n}\geq1-\gamma.
		\end{equation*}
		
		Then, by Theorem 13.1 in \cite{Bill} we obtain the result.
\end{proof}

\begin{lem}\label{lem-3}
	Let $d\in\mathbb{N}$. Let $Z^{(n)}$, $n\in\mathbb{N}$, and $Z$ be random vectors on $\mathbb{R}^{d2^d+1}$ such that $Z^{(n)}\stackrel{d}{\to}Z$ as $n\to\infty$. Denote the elements of $Z^{(n)}$ as follows $Z^{(n)}=(Z^{(n)}_{0},Z^{(n)}_{\frac{1}{2^{d}}},Z^{(n)}_{\frac{2}{2^{d}}},...,Z^{(n)}_{1},$ $Z^{(n)}_{1+\frac{1}{2^{d}}},...,Z^{(n)}_{d})$. Let $T\leq d$ and let for $t\in[0,T]$, $\hat{Z}_{t}^{(n)}:=Z^{(n)}_{\frac{\lfloor t2^{d}\rfloor}{2^{d}}}$ and $\hat{Z}_{t}:=Z_{\frac{\lfloor t2^{d}\rfloor}{2^{d}}}$. Then, for every fixed $0<\delta<T$, we have that
	\begin{equation*}
	\sup_{\substack{t_{1}\leq t\leq t_{2} \\t_{2}-t_{1}\leq\delta\\t_{1},t_{2}\in[0,T]}}|\hat{Z}_{t_{2}}^{(n)}-\hat{Z}_{t}^{(n)}|\wedge|\hat{Z}_{t}^{(n)}-\hat{Z}_{t_{1}}^{(n)}|\stackrel{d}{\to}	\sup_{\substack{t_{1}\leq t\leq t_{2}\\t_{2}-t_{1}\leq\delta\\t_{1},t_{2}\in[0,T]}}|\hat{Z}_{t_{2}}-\hat{Z}_{t}|\wedge|\hat{Z}_{t}-\hat{Z}_{t_{1}}|,
	\end{equation*}
	\begin{equation*}
	\sup_{t\in[0,T]}|\hat{Z}_{t}^{(n)}|\stackrel{d}{\to}	\sup_{t\in[0,T]}|\hat{Z}_{t}|.
	\end{equation*}
	Further, for every fixed $0<\delta<T$ we have that
	\begin{equation*}
	\sup_{\substack{t_{1}\leq t\leq t_{2}\\t_{2}-t_{1}\leq\delta\\t_{1},t_{2}\in[0,T]}}|\hat{Z}_{t_{2}}-\hat{Z}_{t}|\wedge|\hat{Z}_{t}-\hat{Z}_{t_{1}}|\stackrel{everywhere}{=}\sup_{\substack{t_{1}\leq t\leq t_{2}\\t_{2}-t_{1}\leq\delta\wedge\frac{\lfloor T2^{d}\rfloor}{2^{d}}\\t_{1},t_{2}\in[0,\frac{\lfloor T2^{d}\rfloor}{2^{d}}]}}|\hat{Z}_{t_{2}}-\hat{Z}_{t}|\wedge|\hat{Z}_{t}-\hat{Z}_{t_{1}}|,
	\end{equation*}
	\begin{equation*}
	\sup_{t\in[0,T]}|\hat{Z}_{t}|\stackrel{everywhere}{=}\sup_{t\in[0,\frac{\lfloor T2^{d}\rfloor}{2^{d}}]}|\hat{Z}_{t}|
	\end{equation*}
	and that
	\begin{equation*}
	|\hat{Z}_{\delta}-\hat{Z}_{0}|\stackrel{everywhere}{=}|Z_{p}-Z_{q}|,\quad\quad |\hat{Z}_{T-}-\hat{Z}_{T-\delta}|\stackrel{everywhere}{=}|Z_{i}-Z_{j}|,
	\end{equation*}
	for some $p,q,i,j\in\{0,\frac{1}{2^{d}},\frac{2}{2^{d}},...,d\}$.
\end{lem}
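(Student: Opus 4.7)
The plan is to proceed in two stages: first, reduce each supremum over the continuum $t\in[0,T]$ to a maximum over the finite dyadic grid $\{0,\tfrac{1}{2^{d}},\ldots,\tfrac{\lfloor T 2^d\rfloor}{2^d}\}$, thereby establishing the ``everywhere'' equalities; second, apply the continuous mapping theorem to the resulting finite-dimensional functionals to obtain the two distributional convergences. Conceptually this is a direct extension of Lemma \ref{lem-2} from $[0,1]$ to the (possibly non-dyadic) terminal time $T\leq d$.

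For the ``everywhere'' equalities, I would exploit the fact that $\hat{Z}_t=Z_{\lfloor t 2^d\rfloor/2^d}$ is constant on the terminal stub $(\lfloor T 2^d\rfloor/2^d,T]$, taking the value $Z_{\lfloor T 2^d\rfloor/2^d}$ there. Hence for any feasible triple $t_1\leq t\leq t_2$ with $t_2$ in the stub, one may replace $t_2$ by $\lfloor T 2^d\rfloor/2^d$ (and $t$ likewise, if $t$ is also in the stub) without changing $\hat{Z}_{t_2}$ or $\hat{Z}_t$ and without violating $t_1\leq t\leq t_2$ or $t_2-t_1\leq\delta$, since $t_2-t_1$ only decreases. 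Consequently the supremum over $[0,T]$ equals the supremum over $[0,\lfloor T 2^d\rfloor/2^d]$, on which domain the constraint $t_2-t_1\leq\delta$ automatically reads $t_2-t_1\leq\delta\wedge\tfrac{\lfloor T 2^d\rfloor}{2^d}$. The analogous argument handles $\sup_{t\in[0,T]}|\hat{Z}_t|$. For the boundary differences one reads off directly $|\hat{Z}_\delta-\hat{Z}_0|=|Z_{\lfloor\delta 2^d\rfloor/2^d}-Z_0|$ and, by the step-function structure (distinguishing whether $T$ is dyadic to identify the left limit), $|\hat{Z}_{T-}-\hat{Z}_{T-\delta}|=|Z_i-Z_j|$ for the appropriate grid indices $i,j$.

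For the two distributional convergences I would follow the template of Lemma \ref{lem-2}: define $g_1,g_2:\mathbb{R}^{d 2^d+1}\to[0,\infty)$ by the supremum expressions on the respective right-hand sides, and verify that each is continuous. Thanks to the reductions just established, each $g_i$ equals the maximum over a finite index set of Lipschitz functions of finitely many coordinates of the argument, and the Lipschitz estimates transfer verbatim from Lemma \ref{lem-2}: the same case split (minimum attained in the same sub-interval versus in different sub-intervals) gives $||\hat{x}_{t_2}-\hat{x}_t|\wedge|\hat{x}_t-\hat{x}_{t_1}|-|\hat{y}_{t_2}-\hat{y}_t|\wedge|\hat{y}_t-\hat{y}_{t_1}||\leq 2\|x-y\|_\infty$, while $|\sup_t|\hat{x}_t|-\sup_t|\hat{y}_t||\leq\|x-y\|_\infty$ is immediate. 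The continuous mapping theorem then yields both $\stackrel{d}{\to}$ statements.

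The main obstacle is purely bookkeeping: one must carefully track how the supremum domain collapses when $T$ is not a dyadic multiple of $1/2^d$, checking that removing the terminal stub genuinely leaves each supremum unchanged and that the separation constraint effectively becomes $\delta\wedge\tfrac{\lfloor T 2^d\rfloor}{2^d}$. Once this case analysis is in place, no new analytic input is required beyond the Lipschitz estimates already worked out in Lemma \ref{lem-2}.
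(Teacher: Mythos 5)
Your proposal is correct and matches the paper's (very terse) proof in substance: the paper likewise derives the distributional convergences from the Lipschitz/continuous-mapping arguments of Lemmas \ref{lem-1} and \ref{lem-2}, and the ``everywhere'' equalities from the stepwise structure of $\hat{Z}$. Your write-up simply makes explicit the grid-reduction bookkeeping that the paper leaves implicit.
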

\begin{proof}
	The first statement follows from the same arguments as the ones used in the proof of Lemmas \ref{lem-1} and \ref{lem-2}. The second statement follows immediately from the stepwise structure of $\hat{Z}$.
\end{proof}
In the next result we are going to prove the density result in the space of processes with paths in $D[0,\infty)$ endowed with the Skorokhod topology.
\begin{thm}\label{T-D-inf}
	The class of $\Phi$ processes with stepwise paths is dense in the space of processes with paths in $D[0,\infty)$ endowed with the Skorokhod topology with respect to weak convergence.
\end{thm}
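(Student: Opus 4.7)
The plan is to extend the strategy of Theorem \ref{T-D} from the compact interval $[0,1]$ to $[0,\infty)$, relying on Billingsley's characterisation of weak convergence and tightness on $D[0,\infty)$ through restrictions to $D[0,T]$ for $T$ in a cofinal subset of $[0,\infty)$ (Theorem 16.7 in \cite{Bill}).

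Given any c\'{a}dl\'{a}g process $(X_t)_{t\geq 0}$, for each $n\in\mathbb{N}$ I would use the density of $\Phi$ together with Lemma \ref{lem-3} to construct an $(n2^n+1)$-dimensional $\Phi$ random vector
\begin{equation*}
Y^{(n)}=\bigl(Y^{(n)}_0,Y^{(n)}_{1/2^n},Y^{(n)}_{2/2^n},\ldots,Y^{(n)}_n\bigr)
\end{equation*}
satisfying $\rho_{n2^n+1}(Y^{(n)},(X_0,X_{1/2^n},\ldots,X_n))<1/n$, and such that the Prokhorov distance between the laws of each of the functionals appearing in Lemma \ref{lem-3}, evaluated at the stepwise process coming from $Y^{(n)}$ versus that coming from the grid values of $X$, is also smaller than $1/n$, taken simultaneously for every dyadic sub-interval $[0,T]$ with $T\in\{1,\ldots,n\}$ and every mesh $\delta_m=2^{-m}$ with $m<n$. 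Such a simultaneous approximation is possible because only finitely many continuous functionals on $\mathbb{R}^{n2^n+1}$ are involved for fixed $n$. I would then set
\begin{equation*}
\hat{Y}^{(n)}_t:=Y^{(n)}_{\lfloor t2^n\rfloor/2^n}\text{ for }t\in[0,n],\qquad \hat{Y}^{(n)}_t:=Y^{(n)}_n\text{ for }t>n,
\end{equation*}
which is a c\'{a}dl\'{a}g $\Phi$ process on $[0,\infty)$ by the floor-function argument used in the proof of Theorem \ref{T-D}.

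For fdd convergence at times $t_1,\ldots,t_k$ in the set $T_X$ of continuity points of the law of $X$, I would take $n$ so large that $n>\max_i t_i$ and run the triangular-inequality argument from the proof of Theorem \ref{T-D} applied to the restriction on $[0,n]$, yielding $\rho_k((\hat{Y}^{(n)}_{t_1},\ldots,\hat{Y}^{(n)}_{t_k}),(X_{t_1},\ldots,X_{t_k}))\to 0$. For tightness, I would fix an arbitrary $T>0$ and, for $n\geq T$, reproduce the tightness argument of Theorem \ref{T-D} on $D[0,T]$: the restricted laws of $\hat{X}^{(n)}|_{[0,T]}$ are tight because $\hat{X}^{(n)}|_{[0,T]}\stackrel{d}{\to} X|_{[0,T]}$ by Lemma 3 p.~127 of \cite{Bill}, and the Prokhorov bounds imposed through Lemma \ref{lem-3} transfer the modulus and supremum conditions (13.8) of \cite{Bill} from $\hat{X}^{(n)}|_{[0,T]}$ to $\hat{Y}^{(n)}|_{[0,T]}$ in the same way as in the proof of Theorem \ref{T-D}. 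Theorem 16.7 of \cite{Bill} then upgrades tightness on every $D[0,T]$ to tightness on $D[0,\infty)$, which combined with the fdd convergence gives $\hat{Y}^{(n)}\stackrel{d}{\to} X$. The main technical nuisance will be purely organisational: arranging one vector $Y^{(n)}$ that simultaneously satisfies the finitely many joint Prokhorov constraints indexed by $T\in\{1,\ldots,n\}$ and $m<n$, and verifying that the constant extension of $\hat{Y}^{(n)}$ past $t=n$ does not interfere with tightness on any compact $[0,T]$ (it cannot, since for $T<n$ that part of the path is irrelevant to the restriction).
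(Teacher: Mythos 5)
Your proposal is correct and follows essentially the same route as the paper: the same $(n2^n+1)$-dimensional $\Phi$ vector on the dyadic grid of $[0,n]$ with constant extension past $n$, the same finitely-many simultaneous Prokhorov constraints justified via Lemma \ref{lem-3}, fdd convergence and tightness of the restrictions exactly as in Theorem \ref{T-D}, and the final passage to $D[0,\infty)$ via Theorem 16.7 of \cite{Bill}. The only cosmetic difference is that the paper states the constraints as suprema over $T\in[0,n]$ and then reduces them to maxima over the dyadic grid points, whereas you index them over integer $T$ from the start; both reductions rest on the same observation that only finitely many continuous functionals are involved for each fixed $n$.
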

\begin{proof}
	Let $(X_{t})_{t\in[0,1]}$ be any c\'{a}dl\'{a}g process. Let $\delta_{m}:=\frac{1}{2^{m}}$, $m\in\mathbb{N}$. For every $n\in\mathbb{N}$, consider a $(n2^{n}+1)$-dimensional $\Phi$ random vector 
	\begin{equation*}
	Y^{(n)}:=(Y^{(n)}_{0},Y^{(n)}_{\frac{1}{2^{n}}},Y^{(n)}_{\frac{2}{2^{n}}},...,Y^{(n)}_{1},Y^{(n)}_{1+\frac{1}{2^{n}}},...,Y^{(n)}_{n})
	\end{equation*}
	such that
	\begin{equation*}
	\rho_{n2^{n}+1}\left(Y^{(n)},(X_{0},X_{\frac{1}{2^{n}}},...,X_{n}) \right)< \frac{1}{n},
	\end{equation*}
	\begin{equation*}
	\sup_{T\in[0,n]}\max_{m<n}\rho(\sup_{\substack{t_{1}\leq t\leq t_{2}\\t_{2}-t_{1}\leq\delta_{m}\\t_{1},t_{2}\in[0,T]}}|\hat{Y}_{t_{2}}^{(n)}-\hat{Y}_{t}^{(n)}|\wedge|\hat{Y}_{t}^{(n)}-\hat{Y}_{t_{1}}^{(n)}|,\sup_{\substack{t_{1}\leq t\leq t_{2}\\t_{2}-t_{1}\leq\delta_{m}\\t_{1},t_{2}\in[0,T]}}|\hat{X}_{t_{2}}^{(n)}-\hat{X}_{t}^{(n)}|\wedge|\hat{X}_{t}^{(n)}-\hat{X}_{t_{1}}^{(n)}|)<\frac{1}{n},
	\end{equation*}
	\begin{equation*}
	\sup_{T\in[0,n]}\rho(\sup_{t\in[0,T]}|\hat{Y}_{t}^{(n)}|,\sup_{t\in[0,T]}|\hat{X}_{t}^{(n)}|)<\frac{1}{n},
	\end{equation*}
	\begin{equation*}
	\max_{i,j\in\{0,\frac{1}{2^{n}},\frac{2}{2^{n}},...,n\}}\rho(|Y^{(n)}_{i}-Y^{(n)}_{j}|,|X_{i}-X_{j}|)<\frac{1}{n},
	\end{equation*}
	where
	\begin{equation*}
	\textnormal{$\hat{Y}_{t}^{(n)}:=\begin{cases}
		Y^{(n)}_{\frac{\lfloor t2^{n}\rfloor}{2^{n}}}&\textnormal{for $t\in[0,n]$},\\Y^{(n)}_{n}&\textnormal{for $t\in(n,\infty)$},
		\end{cases}\quad$ and $\quad\hat{X}_{t}^{(n)}:=\begin{cases}
		X_{\frac{\lfloor t2^{n}\rfloor}{2^{n}}}&\textnormal{for $t\in[0,n]$},\\X_{n}&\textnormal{for $t\in(n,\infty)$}.
		\end{cases}$}
	\end{equation*}
Observe that $Y^{(n)}$ exists because of the density of the family of distributions $\Phi$ and because of Lemma \ref{lem-3}. In particular, by Lemma \ref{lem-3} we have that
\begin{equation*}
\sup_{T\in[0,n]}\max_{m<n}\rho(\sup_{\substack{t_{1}\leq t\leq t_{2}\\t_{2}-t_{1}\leq\delta_{m}\\t_{1},t_{2}\in[0,T]}}|\hat{Y}_{t_{2}}^{(n)}-\hat{Y}_{t}^{(n)}|\wedge|\hat{Y}_{t}^{(n)}-\hat{Y}_{t_{1}}^{(n)}|,\sup_{\substack{t_{1}\leq t\leq t_{2}\\t_{2}-t_{1}\leq\delta_{m}\\t_{1},t_{2}\in[0,T]}}|\hat{X}_{t_{2}}^{(n)}-\hat{X}_{t}^{(n)}|\wedge|\hat{X}_{t}^{(n)}-\hat{X}_{t_{1}}^{(n)}|)
\end{equation*}
\begin{equation*}
=\max_{T\in\{0,\frac{1}{2^{n}},\frac{2}{2^{n}},...,n\}}\max_{m<n}\rho(\sup_{\substack{t_{1}\leq t\leq t_{2}\\t_{2}-t_{1}\leq\delta_{m}\\t_{1},t_{2}\in[0,T]}}|\hat{Y}_{t_{2}}^{(n)}-\hat{Y}_{t}^{(n)}|\wedge|\hat{Y}_{t}^{(n)}-\hat{Y}_{t_{1}}^{(n)}|,\sup_{\substack{t_{1}\leq t\leq t_{2}\\t_{2}-t_{1}\leq\delta_{m}\\t_{1},t_{2}\in[0,T]}}|\hat{X}_{t_{2}}^{(n)}-\hat{X}_{t}^{(n)}|\wedge|\hat{X}_{t}^{(n)}-\hat{X}_{t_{1}}^{(n)}|),
\end{equation*}
and
\begin{equation*}
\sup_{T\in[0,n]}\rho(\sup_{t\in[0,T]}|\hat{Y}_{t}^{(n)}|,\sup_{t\in[0,T]}|\hat{X}_{t}^{(n)}|)=\max_{T\in\{0,\frac{1}{2^{n}},\frac{2}{2^{n}},...,n\}}\rho(\sup_{t\in[0,T]}|\hat{Y}_{t}^{(n)}|,\sup_{t\in[0,T]}|\hat{X}_{t}^{(n)}|).
\end{equation*}
Considering the proof of Theorem \ref{T-D}, it is immediate to see that $(\hat{Y}_{t}^{(n)})_{t\in[0,\infty)}$ is a c\'{a}dl\'{a}g process.

Now, let $x$ be an element of $D[0,\infty)$ and let $r_{t}x$ be the restriction of $x$ to $[0,t]$. As shown in page 174 in \cite{Bill} $r_{t}$ is $D[0,\infty)/D[0,t]$ measurable. Notice that for any $T\in\mathbb{N}$ by Lemma 3 page 127 in \cite{Bill} we have that, for every $n\geq T$, $d_{T}(r_{T}\hat{X}^{(n)},r_{T}X)\leq 2^{-n}\vee w'_{r_{T}X}(2^{-n})$ and so
\begin{equation*}
d_{T}^{\circ}(r_{T}\hat{X}^{(n)},r_{T}X)\stackrel{everywhere}{\to}0,\quad n\to\infty.
\end{equation*}
Then, by Lemma 1 in page 167 in \cite{Bill} we have that, for every $\omega\in\Omega$ and $s\in[0,T]$ such that $(X(\omega)_{t\in[0,T]})$ is continuous at $s$,
\begin{equation*}
d_{s}^{\circ}(r_{s}\hat{X}^{(n)}(\omega),r_{s}X(\omega))\to0,\quad n\to\infty.
\end{equation*}
This implies that for every $s$ in $T_{X}$, namely the set of continuity point of $X$, and such that $s\leq T$, we have that
\begin{equation*}
d_{s}^{\circ}(r_{s}\hat{X}^{(n)},r_{s}X)\stackrel{a.s.}{\to}0,\quad n\to\infty.
\end{equation*}
Since $T$ was arbitrary, we have shown that for every continuity point $z$ of $X$ we have that
\begin{equation*}
d_{z}^{\circ}(r_{z}\hat{X}^{(n)},r_{z}X)\stackrel{a.s.}{\to}0,\quad n\to\infty,
\end{equation*}
which by Corollary of page 28 in \cite{Bill} implies that $r_{z}\hat{X}^{(n)}\stackrel{d}{\to}r_{z}X$, as $n\to\infty$.

Let $z\in T_{X}$. Following the same arguments as the ones used in the proof of Theorem \ref{T-D} for the fdd convergence, we obtain  that
\begin{equation*}
r_{z}\hat{Y}^{(n)}\stackrel{fdd}{\to}r_{z}X,\quad n\to\infty.
\end{equation*} 
Moreover, following the same arguments as the ones used in the proof of Theorem \ref{T-D} for tightness, with the additional constraint that both $n_{0}$ and $\tilde{n}_{0}$ must also be greater or equal than $z$, we obtain the tightness of the family of probability distributions of $r_{z}Y_{n}$, $n\in\mathbb{N}$. Then, by Theorem 13.1 in \cite{Bill} we obtain that $r_{z}\hat{Y}^{(n)}\stackrel{d}{\to}r_{z}X$, as $n\to\infty$.

Since the arguments hold for every $z\in T_{X}$, by Theorem 16.7 in \cite{Bill} we obtain the result.
\end{proof}
Thanks to this result we are able to provide further classes of dense stochastic processes in the space of processes with paths in $D[0,T]$ endowed with the Skorokhod topology. The following results differ from Theorem \ref{T-D} on how we consider the last step of the approximating step-wise processes.
\begin{co}
The set of approximating stochastic processes in the proof of Theorem \ref{T-D-inf} truncated at time $T$ is dense in the space of processes with paths in $D[0,T]$ endowed with the Skorokhod topology with respect to weak convergence.
\end{co}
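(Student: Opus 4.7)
The plan is to derive the corollary from Theorem~\ref{T-D-inf} by an extension-and-restriction argument. Given any c\'adl\'ag process $X$ on $[0,T]$, I would first extend it to a c\'adl\'ag process $\tilde X$ on $[0,\infty)$ by $\tilde X_t := X_{t\wedge T}$, so that $\tilde X$ agrees with $X$ on $[0,T]$ and is almost surely constant, hence continuous, on $(T,\infty)$. Applying Theorem~\ref{T-D-inf} to $\tilde X$ produces a sequence of $\Phi$ processes $\hat Y^{(n)}$ with $\hat Y^{(n)}\stackrel{d}{\to}\tilde X$ in $D[0,\infty)$, and by construction the restrictions $\tilde Y^{(n)}:=\hat Y^{(n)}|_{[0,T]}$ are precisely the truncated approximating processes described in the statement.

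Second, I would exploit the fact that every $s>T$ lies in the continuity set $T_{\tilde X}$ (by a.s.\ constancy of $\tilde X$ past $T$). The proof of Theorem~\ref{T-D-inf}, via its appeal to Theorem~16.7 in \cite{Bill}, already yields $r_s\hat Y^{(n)}\stackrel{d}{\to} r_s\tilde X$ in $D[0,s]$ for every such $s$. Fix one $s>T$ and invoke Skorokhod's representation theorem to realise this convergence almost surely on a common probability space, encoded by time changes $\lambda^{(n)}\in\Lambda_s$ with $\lambda^{(n)}\to\mathrm{id}$ uniformly and $\hat Y^{(n)}\circ\lambda^{(n)}\to\tilde X$ uniformly on $[0,s]$.

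The final step is to pass from convergence in $D[0,s]$ to convergence in $D[0,T]$. I would set $\mu^{(n)}(t):=T\lambda^{(n)}(t)/\lambda^{(n)}(T)$ for $t\in[0,T]$; since $\lambda^{(n)}(T)\to T$, these rescaled maps lie in $\Lambda_T$ and converge uniformly to the identity on $[0,T]$. Exploiting the constancy of $\tilde X$ on $[T,s]$ together with the stepwise mesh $2^{-n}$ of $\hat Y^{(n)}$, one checks that swapping $\lambda^{(n)}|_{[0,T]}$ for $\mu^{(n)}$ costs only a vanishing error, so $\tilde Y^{(n)}\circ\mu^{(n)}\to X$ uniformly on $[0,T]$ almost surely, and therefore $\tilde Y^{(n)}\stackrel{d}{\to} X$ in $D[0,T]$.

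The principal technical obstacle is exactly this endpoint rescaling. The Skorokhod metric on $D[0,T]$ forces $\mu^{(n)}(T)=T$, whereas the natural time change $\lambda^{(n)}$ coming from the $D[0,s]$-convergence maps $T$ to a point $\lambda^{(n)}(T)$ lying slightly past $T$; the constancy of $\tilde X$ on $[T,s]$ together with the shrinking mesh of $\hat Y^{(n)}$ are precisely the two ingredients that neutralise the resulting endpoint discrepancy in the limit.
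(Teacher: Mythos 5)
Your overall strategy (extend $X$ constantly past $T$, approximate the extension via Theorem \ref{T-D-inf}, then come back down to $[0,T]$) is the same route the paper's one-line proof has in mind, and the first two steps are fine. The gap is in the final step, and it is not a removable technicality: the ``vanishing error'' you claim when swapping $\lambda^{(n)}|_{[0,T]}$ for $\mu^{(n)}$ is in fact of order $|X_{T}-X_{T-}|$. Convergence in $D[0,T]$ pins the right endpoint, since every $\mu\in\Lambda_T$ satisfies $\mu(T)=T$; hence $\tilde Y^{(n)}\stackrel{d}{\to}X$ in $D[0,T]$ would force $\hat Y^{(n)}_T\to X_T$ in distribution. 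But $\hat Y^{(n)}_T=Y^{(n)}_{\lfloor T2^{n}\rfloor/2^{n}}$, which by construction tracks $\tilde X_{\lfloor T2^{n}\rfloor/2^{n}}$, and when $T$ is not a dyadic rational this converges to $X_{T-}$, not $X_T$. Concretely, take $X$ deterministic with $X_t=0$ for $t<T$ and $X_T=1$, with $T$ non-dyadic. Then $\tilde X=\mathbf{1}_{[T,\infty)}$ and $\hat Y^{(n)}\approx\mathbf{1}_{[\lceil T2^{n}\rceil 2^{-n},\infty)}$ does converge to $\tilde X$ in $D[0,s]$ for every $s>T$ (slide the jump time), yet $\mu^{(n)}(t)\le T<\lceil T2^{n}\rceil 2^{-n}$ gives $\tilde Y^{(n)}\circ\mu^{(n)}\equiv 0$ on $[0,T]$, at uniform distance $1$ from $X$. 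Neither the constancy of $\tilde X$ on $[T,s]$ nor the shrinking mesh neutralises the endpoint discrepancy whenever $\mathbb{P}(X_{T-}\neq X_T)>0$.

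Note also that on the complementary case $\mathbb{P}(X_{T-}=X_T)=1$ your detour through $s>T$ and the Skorokhod representation theorem is unnecessary: there $T$ itself lies in $T_{\tilde X}$ and Theorem 16.7 of \cite{Bill} applied at $t=T$ gives $r_T\hat Y^{(n)}\stackrel{d}{\to}r_T\tilde X=X$ directly, which is exactly the paper's argument. So your proof closes precisely where the short argument already works and breaks precisely where it does not. Worse, the obstruction in the bad case is intrinsic to the class rather than to your argument: for non-dyadic $T$, every truncated approximant is constant on the terminal interval $[\lfloor T2^{n}\rfloor 2^{-n},T]$ of positive length, and any such path is at $d_T$-distance at least $1/2$ from $\mathbf{1}_{\{T\}}$ (the value at $T$ and the value just before $T$ cannot simultaneously be near $1$ and near $0$). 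Hence no sequence from the class converges in distribution to the point mass at $\mathbf{1}_{\{T\}}$, and a correct proof must either restrict to $T$ in the dyadic grid (or to processes a.s.\ continuous at $T$), or enlarge the approximating class so that the skeleton near $T$ can record $X_T$ rather than $X_{T-}$.
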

\begin{proof}
	This is an application of Theorem 16.7 in \cite{Bill} in combination with Theorem \ref{T-D-inf}.
\end{proof}
Let $m\in\mathbb{N}$ and let
\begin{equation*}
g_m(t)=\begin{cases}
1&\textnormal{ if $t\leq m-1$},\\
m-t&\textnormal{ if $m-1\leq t\leq m$},\\
0&\textnormal{ if $t\geq m$}.
\end{cases}
\end{equation*}
Further, for every path $x\in D[0,\infty)$ let $\psi_mx(t):=g_m(t)x(t)$, for $t\in[0,m]$. Observe that $\psi_mx\in D[0,m]$. Let $\tilde{D}[0,m]$ the set of paths $\psi_mx$ for every $x\in D[0,\infty)$. Notice that $\tilde{D}[0,m]\subset D[0,m]$.
\begin{co}
	Let $m\in\mathbb{N}$. Apply $\psi_m$ to the approximating stochastic processes in the proof of Theorem \ref{T-D-inf}. Then this class of stochastic processes is dense in the space of processes with paths in $\tilde{D}[0,m]$ endowed with the Skorokhod topology with respect to weak convergence.
\end{co}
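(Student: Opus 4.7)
The plan is to reduce the statement to Theorem~\ref{T-D-inf} through the continuous mapping theorem, using $\psi_m$ as the continuous map between the relevant Skorokhod spaces. Two ingredients are needed: first, representing any process $Z$ with paths in $\tilde{D}[0,m]$ as $\psi_m X$ for some process $X$ on $[0,\infty)$; second, establishing that $\psi_m:D[0,\infty)\to D[0,m]$ is continuous with respect to the Skorokhod topologies.

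For the first ingredient, I would define $X$ pathwise by $X(t):=Z(t)/g_m(t)$ for $t\in[0,m)$. By definition of $\tilde{D}[0,m]$, each realization of $Z$ is of the form $\psi_m y$ for some $y\in D[0,\infty)$, so pathwise $X(t)=y(t)$ on $[0,m)$ and the left limit $\lim_{s\uparrow m}X(s)=y(m-)$ exists. Setting $X(m):=\lim_{s\uparrow m}X(s)$ (and extending $X$ constantly on $(m,\infty)$) gives $X\in D[0,\infty)$ almost surely and $\psi_m X=Z$ almost surely (using $g_m(m)=0=Z(m)$). Theorem~\ref{T-D-inf} then produces a sequence of stepwise $\Phi$ processes $\hat{Y}^{(n)}$, as constructed in its proof, with $\hat{Y}^{(n)}\stackrel{d}{\to}X$ in $D[0,\infty)$; once continuity of $\psi_m$ is established, the continuous mapping theorem yields $\psi_m\hat{Y}^{(n)}\stackrel{d}{\to}\psi_m X=Z$ in $D[0,m]$.

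For the second ingredient, I would factor $\psi_m=r_m\circ M_{g_m}$, where $M_{g_m}:D[0,\infty)\to D[0,\infty)$ is multiplication by $g_m$ (extended to a continuous function on all of $[0,\infty)$ that vanishes on $[m,\infty)$) and $r_m:D[0,\infty)\to D[0,m]$ is the restriction. Continuity of $M_{g_m}$ follows from a standard time-change estimate: if $x_n\to x$ in $D[0,\infty)$ with time changes $\lambda_n$ on each compact $[0,T]$ satisfying $\sup_t|\lambda_n t-t|\to 0$ and $\sup_t|x_n(\lambda_n t)-x(t)|\to 0$, then uniform continuity of $g_m$ together with the boundedness of $x$ on $[0,T]$ give $\sup_t|g_m(\lambda_n t)x_n(\lambda_n t)-g_m(t)x(t)|\to 0$. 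Continuity of $r_m$ at $y$ holds whenever $y$ is continuous at $m$ (by Lemma~1 on p.~167 of \cite{Bill}, as invoked in the proof of Theorem~\ref{T-D-inf}), and the crucial observation is that $(g_m\cdot x)(m)=g_m(m)x(m)=0$ coincides with the left limit $(g_m\cdot x)(m-)=g_m(m-)x(m-)=0$ for every $x$, so $g_m\cdot x$ is always continuous at $m$. Hence $r_m$ is continuous at $M_{g_m}x$ for every $x$, and therefore $\psi_m$ is continuous everywhere on $D[0,\infty)$.

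The main obstacle is the transition from the Skorokhod topology on $D[0,\infty)$ to that on $D[0,m]$: restriction maps between Skorokhod spaces are in general only continuous at functions continuous at the cutoff, and without a further structural input this could fail on a set of positive probability. The argument works precisely because $g_m(m)=0$ forces $g_m\cdot x$ to be continuous at $m$ irrespective of any discontinuity of $x$ there, which circumvents this difficulty and makes the continuous mapping theorem applicable.
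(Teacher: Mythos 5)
Your proposal is correct and follows essentially the same route as the paper: the paper's proof is a one-line appeal to Lemma 3 on p.~173 of \cite{Bill} (which is precisely the continuity of $\psi_m:D[0,\infty)\to D[0,m]$) combined with Theorem \ref{T-D-inf} and the continuous mapping theorem. You simply supply the details that the citation hides, namely the proof that $\psi_m$ is continuous (via the observation that $g_m(m)=0$ forces continuity at the cutoff) and the representation of a $\tilde{D}[0,m]$-valued process as $\psi_m X$.
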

\begin{proof}
	This is an application of Lemma 3 page 173 in \cite{Bill} in combination with Theorem \ref{T-D-inf}.
\end{proof}
All the results presented in this section show the following remark: once we have a dense class of probability distributions then it is possible to construct various dense classes of stochastic processes in different topological frameworks.
\section{The QID case}\label{Sec-QID}
In this section we investigate the implication of the above results in the QID case. First, we introduce the following conjecture.
\begin{conj}\label{T1}
	Let $d\in\mathbb{N}$. The class of QID distributions on $\mathbb{R}^{d}$ with finite
	quasi-L\'{e}vy measure and zero Gaussian variance is dense in the space of probability distributions on $\mathbb{R}^{d}$ with respect to weak convergence.
\end{conj}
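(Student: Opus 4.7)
The plan is to push the one-dimensional density argument of \cite{LPS} into $\mathbb{R}^{d}$. By L\'{e}vy's continuity theorem, the desired weak density is equivalent to the following: for every characteristic function $\hat{\mu}$ on $\mathbb{R}^{d}$, every compact $K\subset\mathbb{R}^{d}$ and every $\varepsilon>0$ there is a QID law $\mu_{K,\varepsilon}$ with finite quasi-L\'{e}vy measure and zero Gaussian component such that $\sup_{z\in K}|\hat{\mu}(z)-\hat{\mu}_{K,\varepsilon}(z)|<\varepsilon$. The whole question thus reduces to asking which characteristic functions on $\mathbb{R}^{d}$ can be approximated, uniformly on compacta, by exponentials of the form
\begin{equation*}
\phi(z)=i\gamma\cdot z+\int_{\mathbb{R}^{d}}\bigl(e^{iz\cdot x}-1\bigr)\,\nu(dx),
\end{equation*}
with $\nu$ a finite signed measure on $\mathbb{R}^{d}$ not charging the origin; any such $\exp(\phi)$ is automatically the characteristic function of a QID law with finite quasi-L\'{e}vy measure $\nu$ and zero Gaussian component.

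As a preparatory step, I would replace $\mu$ by a small perturbation $\mu\ast\nu_{\eta}$ whose characteristic function is bounded away from zero on $K$. A convenient choice is $\nu_{\eta}$ a compound Poisson law with small rate and a symmetric jump distribution, which is itself QID with finite positive L\'{e}vy measure and no Gaussian component. Thus the perturbation stays inside the target class, and a diagonal argument as $\eta\downarrow 0$ reduces matters to approximating the regularised $\hat{\mu}\hat{\nu}_{\eta}$.

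The core step is Fourier-analytic: on $K$, fix a continuous branch $\psi$ of $\log(\hat{\mu}\hat{\nu}_{\eta})$ with $\psi(0)=0$, and approximate $\psi$ uniformly on $K$ by functions of the displayed form. A natural attempt is to search for $\nu$ supported on a fine grid $L_{N}\cap B(0,R_{N})\setminus\{0\}$ with real coefficients $c_{N,x}$, so that the approximation problem becomes one of multivariate trigonometric-polynomial density in the space $\{f\in C(K)\colon f(0)=0\}$, attackable in principle via Fej\'{e}r- or Jackson-type estimates together with a multivariate Stone--Weierstrass argument. If such $\phi_{N}\to\psi$ uniformly on $K$, then $\exp(\phi_{N})\to\hat{\mu}\hat{\nu}_{\eta}$ there, and combining with the preparatory step delivers the full conclusion.

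The hard part, and the reason the statement remains conjectural, is to secure two points simultaneously: (i) each candidate $\exp(\phi_{N})$ must in fact be the characteristic function of a genuine probability measure on $\mathbb{R}^{d}$, rather than merely a complex-valued exponential with the correct local behaviour; and (ii) the construction must survive taking $\eta\downarrow 0$ without reintroducing a Gaussian component. For $d=1$ the total order on $\mathbb{R}$ and sharp univariate trigonometric identities make both points manageable in \cite{LPS}; for $d\geq 2$ the lack of such structure, together with the topological subtlety of choosing a single-valued branch of $\log(\hat{\mu}\hat{\nu}_{\eta})$ on compacta that need not be simply connected, seems to require genuinely new ingredients, which is exactly what the ongoing work alluded to above Conjecture \ref{T1} is aiming to supply.
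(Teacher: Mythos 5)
The statement you are addressing is not proved in the paper at all: it is stated as Conjecture \ref{T1}, explicitly left open, and merely \emph{assumed} in Section \ref{Sec-QID}. So there is no proof of the paper's to compare yours against, and your text is in any case a programme rather than a proof --- you say yourself that the ``hard part'' is why ``the statement remains conjectural''. As such it cannot be accepted as a proof; but it is worth recording where the concrete obstructions in your plan lie, beyond the ones you flag.

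First, the preparatory regularisation does not work as written. If $\hat{\mu}(z_{0})=0$ for some $z_{0}\in K$, then $\hat{\mu}(z_{0})\hat{\nu}_{\eta}(z_{0})=0$ as well: convolving $\mu$ with anything multiplies characteristic functions and can only preserve or create zeros, never remove them, so $\hat{\mu}\hat{\nu}_{\eta}$ need not be bounded away from zero on $K$ and the continuous logarithm $\psi$ need not exist. The $d=1$ argument of \cite{LPS} avoids this differently: it discretises $\mu$ to a lattice distribution with a dominant atom (mass greater than $1/2$ at one lattice point), whose characteristic function is bounded away from zero on all of $\mathbb{R}$ and is periodic, after which the Wiener--L\'{e}vy theorem on the circle produces an absolutely summable signed Fourier expansion of its logarithm, i.e.\ the quasi-L\'{e}vy measure. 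Second, your claim that any $\exp(\phi)$ with $\phi(z)=i\gamma\cdot z+\int(e^{iz\cdot x}-1)\,\nu(dx)$ for a finite signed $\nu$ is ``automatically the characteristic function of a QID law'' is false, and you in effect retract it two paragraphs later: such an exponential is a QID characteristic function precisely when it is a characteristic function at all, i.e.\ positive definite, and this is exactly the non-trivial content. A Fej\'{e}r-- or Jackson--type uniform approximation of $\psi$ on $K$ gives no control whatsoever on positive definiteness of $\exp(\phi_{N})$ on all of $\mathbb{R}^{d}$. In \cite{LPS} this issue never arises because the approximant is by construction the characteristic function of an honest (lattice) probability measure, and only its \emph{representation} as an exponential with signed L\'{e}vy-type integrand has to be established. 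Any viable attack on Conjecture \ref{T1} would need to reproduce that feature --- approximate by genuine probability measures first, then exhibit their quasi-L\'{e}vy representation (e.g.\ via a multivariate Wiener--L\'{e}vy theorem on the torus) --- rather than approximate the logarithm and hope the exponential is a characteristic function.
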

Now, we present the density results for QID processes in the the various settings discussed in the previous section.

\textit{For all the next results we assume that the Conjecture \ref{T1} is true}.
\begin{co}\label{0-co}
	The class of QID time series s.t.~their fdd~have finite quasi-L\'{e}vy measure and zero Gaussian variance is dense in the space of all time series with respect to the fdd convergence.
\end{co}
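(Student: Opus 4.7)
The plan is to apply Proposition \ref{0} directly with $\Phi$ taken to be the class $\Phi_{\text{QID}}$ of QID distributions on $\mathbb{R}^{d}$ with finite quasi-L\'{e}vy measure and zero Gaussian variance. Conjecture \ref{T1}, which we are assuming, gives precisely the density of $\Phi_{\text{QID}}$ in the space of all probability distributions on $\mathbb{R}^{d}$ for every $d\in\mathbb{N}$, which is the only hypothesis on $\Phi$ required by Proposition \ref{0}.

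The one point needing care is that the approximating sequence produced in the proof of Proposition \ref{0} has the form $Y^{(n)}=(Y^{(n)}_{1},\dots,Y^{(n)}_{n},0,0,\dots)$, so one must verify that \emph{every} finite-dimensional distribution of this process lies in $\Phi_{\text{QID}}$, not merely the block $(Y^{(n)}_{1},\dots,Y^{(n)}_{n})$. For a choice of times $t_{1}<\cdots<t_{k}$ with all $t_{i}\leq n$, the fdd is a marginal of the $\Phi_{\text{QID}}$ random vector $(Y^{(n)}_{1},\dots,Y^{(n)}_{n})$; marginalisation acts on the characteristic exponent by setting the corresponding coordinates of the argument to zero, so the quasi-L\'{e}vy measure of the marginal is the pushforward of the original one under the coordinate projection, and both finiteness and the zero Gaussian component are preserved. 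When some $t_{i}>n$, the fdd is obtained by appending deterministic zero coordinates to such a marginal; this corresponds to pushing the quasi-L\'{e}vy measure forward under the inclusion $x\mapsto(x,0,\dots,0)$, which again preserves both properties. Hence $Y^{(n)}$ is a genuine $\Phi_{\text{QID}}$ time series in the sense of the corollary.

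With this stability check in hand, the conclusion follows verbatim from the proof of Proposition \ref{0}, with no modification of the construction or of the Prokhorov-metric estimates. The main (essentially only) obstacle is therefore the verification that $\Phi_{\text{QID}}$ is closed under marginalisation and under appending of trivial coordinates; this is a standard structural property of QID laws established in the references cited in the paper (see \cite{LPS,Passeggeri,Passeggeri2}), so the corollary is immediate once Conjecture \ref{T1} is granted.
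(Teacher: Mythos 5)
Your proposal is correct and follows the same route as the paper, whose proof of this corollary is simply the one-line observation that it follows from Conjecture \ref{T1} and Proposition \ref{0}. Your additional check that the class is closed under marginalisation and under appending deterministic zero coordinates (both being linear images, which preserve the QID property, the finiteness of the quasi-L\'{e}vy measure, and the vanishing of the Gaussian part) is a worthwhile detail that the paper itself only spells out in the analogous Corollary \ref{T-C-co}.
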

\begin{proof}
	It follows from Conjecture \ref{T1} and Proposition \ref{0}.
\end{proof}

\begin{co}\label{T-C-co}
	Let $T>0$. The class of QID processes with linear piecewise paths and s.t.~their fdd have finite quasi-L\'{e}vy measure and zero Gaussian variance is dense in the space of processes with paths in $C([0,T])$ endowed with the uniform topology with respect to weak convergence.
\end{co}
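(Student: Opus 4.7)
The plan is a direct application of Theorem \ref{T-C} with $\Phi$ chosen to be the family of QID distributions on $\mathbb{R}^{d}$, for every $d\in\mathbb{N}$, having finite quasi-L\'{e}vy measure and zero Gaussian variance. Under Conjecture \ref{T1}, this family satisfies precisely the density hypothesis required by Theorem \ref{T-C}, so the class of linear piecewise interpolation processes built from $\Phi$ random vectors is already known to be dense in the space of stochastic processes with paths in $C([0,T])$ endowed with the uniform topology, with respect to weak convergence.

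The only remaining point to check is that the approximating process $\hat{Y}^{(n)}$ constructed by linear interpolation of a $\Phi$ random vector $Y^{(n)}$ is itself a QID process whose fdd have finite quasi-L\'{e}vy measure and zero Gaussian variance, as required by the statement. For any times $t_{1},\ldots,t_{k}\in[0,T]$, each $\hat{Y}^{(n)}_{t_{i}}$ is by construction a convex combination of two adjacent coordinates of $Y^{(n)}$, so the marginal vector $(\hat{Y}^{(n)}_{t_{1}},\ldots,\hat{Y}^{(n)}_{t_{k}})$ equals $AY^{(n)}$ for some deterministic matrix $A$. I would then invoke the standard closure of the QID class under linear maps (see \cite{LPS, Passeggeri2}): the Gaussian covariance transforms as $\Sigma\mapsto A\Sigma A^{\top}$, hence stays zero; the quasi-L\'{e}vy measure transforms, on sets bounded away from the origin, as the pushforward under $x\mapsto Ax$, hence remains finite when the original measure is finite.

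The argument therefore splits into two pieces, neither of which introduces a genuinely new difficulty: Theorem \ref{T-C} supplies the density, and the linear-transformation stability of QID with \emph{finite} quasi-L\'{e}vy measure and zero Gaussian variance supplies the structural claim on the fdd. The only point requiring a little care is the verification that finiteness of the quasi-L\'{e}vy measure is preserved under arbitrary linear projections and not merely under invertible ones; this follows because the pushforward of a finite signed measure under any measurable map is finite with the same total variation bound.
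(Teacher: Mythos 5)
Your proposal is correct and follows essentially the same route as the paper: invoke Theorem \ref{T-C} with $\Phi$ the QID family made dense by Conjecture \ref{T1}, and observe that the fdd of the interpolated process $\hat{Y}^{(n)}$ are linear images $AY^{(n)}$ of a QID vector, hence QID by closure of the class under linear maps (the paper cites the ID analogue, Proposition 11.10 in \cite{Sato}). Your additional check that zero Gaussian variance and finiteness of the quasi-L\'{e}vy measure survive the linear transformation is a welcome detail that the paper leaves implicit.
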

\begin{proof}
	Observe that, for every $n\in\mathbb{N}$, the stochastic process $(\hat{Y}_{t}^{(n)})_{t\in[0,1]}$ is a QID process since all its finite dimensional distributions are QID (see Definition 7.1 in \cite{Passeggeri}). Indeed, this is because given a QID random vector $Z$ in $\mathbb{R}^{p}$ for every matrix $U\in\mathbb{R}^{q\times p}$ we have that $UZ$ is a QID random vector in $\mathbb{R}^{q}$, for every $p,q\in\mathbb{N}$ (see Proposition 11.10 in \cite{Sato} for the ID case -- the QID case is identical). Then, the result follows from Conjecture \ref{T1} and Theorem \ref{T-C}.
\end{proof}
\begin{co}\label{T-D-co}
	Let $T>0$. The class of QID processes with stepwise paths, with steps of equal length, and s.t.~their fdd have finite quasi-L\'{e}vy measure and zero Gaussian variance is dense in the space of processes with paths in $D([0,T])$ endowed with the Skorokhod topology with respect to weak convergence.
\end{co}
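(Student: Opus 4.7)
The plan is to mirror the structure of Corollary \ref{T-C-co}, combining Conjecture \ref{T1} with Theorem \ref{T-D}. First I would instantiate the family $\Phi$ in Theorem \ref{T-D} to be, for each $d\in\mathbb{N}$, the class of QID distributions on $\mathbb{R}^{d}$ with finite quasi-L\'{e}vy measure and zero Gaussian variance. By Conjecture \ref{T1} this family is dense in the space of all probability distributions on $\mathbb{R}^{d}$ for every $d\in\mathbb{N}$, so the hypotheses of Theorem \ref{T-D} are satisfied and the approximating stepwise processes $(\hat{Y}_{t}^{(n)})_{t\in[0,1]}$ constructed there are dense in $D([0,T])$ with respect to weak convergence.

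Next I would verify that each $(\hat{Y}_{t}^{(n)})_{t\in[0,1]}$ belongs to the class described in the corollary, i.e.~that it is a QID process with stepwise paths of equal length whose fdd have finite quasi-L\'{e}vy measure and zero Gaussian variance. The stepwise structure with equal step length is built into the definition $\hat{Y}_{t}^{(n)}=Y^{(n)}_{\lfloor t2^{n}\rfloor/2^{n}}$. For any $k\in\mathbb{N}$ and times $t_{1},\ldots,t_{k}\in[0,1]$, the vector $(\hat{Y}_{t_{1}}^{(n)},\ldots,\hat{Y}_{t_{k}}^{(n)})$ equals $UY^{(n)}$ for a suitable $\{0,1\}$-matrix $U\in\mathbb{R}^{k\times(2^{n}+1)}$ that selects the relevant (possibly repeated) coordinates of $Y^{(n)}$.

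I would then invoke the linear-image property used in the proof of Corollary \ref{T-C-co}: if $Z$ is QID in $\mathbb{R}^{p}$ with finite quasi-L\'{e}vy measure and zero Gaussian covariance, then for every $U\in\mathbb{R}^{q\times p}$ the image $UZ$ is QID in $\mathbb{R}^{q}$ with finite quasi-L\'{e}vy measure and zero Gaussian covariance (the ID version is Proposition 11.10 in \cite{Sato}, and the same computation on characteristic exponents carries over to the QID setting as remarked in \cite{Passeggeri}; the finiteness of the quasi-L\'{e}vy measure is inherited from the pushforward, and the zero Gaussian component is preserved under linear maps). Applying this with $Z=Y^{(n)}$ shows that every fdd of $\hat{Y}^{(n)}$ lies in $\Phi$, so $\hat{Y}^{(n)}$ is a QID process of the required type.

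I expect no real obstacle here: the work has already been done inside Theorem \ref{T-D}, and the only substantive point is the linear-image closure of the QID class with the two auxiliary properties (finite quasi-L\'{e}vy measure and zero Gaussian variance), which is essentially the same verification as in Corollary \ref{T-C-co}. Combining these observations with Theorem \ref{T-D} yields the claimed density.
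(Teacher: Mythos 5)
Your proposal is correct and follows essentially the same route as the paper, whose proof of this corollary is simply the one-line combination of Conjecture \ref{T1} with Theorem \ref{T-D}. The additional verification you give (that the fdd of $\hat{Y}^{(n)}$ are obtained as linear images $UY^{(n)}$ and hence remain QID with finite quasi-L\'{e}vy measure and zero Gaussian variance) is the same argument the paper spells out only in the proof of Corollary \ref{T-C-co}, so you are merely making explicit what the paper leaves implicit here.
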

\begin{proof}
	It follows from Conjecture \ref{T1} and Theorem \ref{T-D}.
\end{proof}
\begin{co}\label{T-D-inf-co}
	The class of QID processes with stepwise paths and s.t.~their fdd have finite quasi-L\'{e}vy measure and zero Gaussian variance is dense in the space of processes with paths in $D[0,\infty)$ endowed with the Skorokhod topology with respect to weak convergence.
\end{co}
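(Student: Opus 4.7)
The plan is to mimic the structure of Corollaries \ref{T-C-co} and \ref{T-D-co}: take $\Phi$ to be the class of QID distributions on $\mathbb{R}^d$ (for every $d\in\mathbb{N}$) with finite quasi-L\'evy measure and zero Gaussian variance, which is dense in the space of all probability distributions on $\mathbb{R}^d$ by Conjecture \ref{T1}. Then apply Theorem \ref{T-D-inf} with this choice of $\Phi$ to produce a dense class of stepwise processes in $D[0,\infty)$ with the Skorokhod topology. All that remains is to check that every process in this dense class is in fact a QID process whose fdd retain the finite quasi-L\'evy measure and zero Gaussian variance properties.

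For this verification, I would observe that in the construction of Theorem \ref{T-D-inf}, the approximating process $(\hat{Y}^{(n)}_t)_{t\in[0,\infty)}$ is defined by $\hat{Y}^{(n)}_t = Y^{(n)}_{\lfloor t 2^n\rfloor/2^n}$ on $[0,n]$ and $\hat{Y}^{(n)}_t = Y^{(n)}_n$ on $(n,\infty)$, where $Y^{(n)}$ is an $(n2^n+1)$-dimensional $\Phi$ random vector. Consequently, for every $k\in\mathbb{N}$ and every $t_1,\dots,t_k\in[0,\infty)$, the vector $(\hat{Y}^{(n)}_{t_1},\dots,\hat{Y}^{(n)}_{t_k})$ is obtained from $Y^{(n)}$ by applying a fixed selection matrix $U\in\mathbb{R}^{k\times (n2^n+1)}$ with one $1$ per row. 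As pointed out in the proof of Corollary \ref{T-C-co}, linear transformations preserve the QID property (the analogue of Proposition 11.10 in \cite{Sato} for the QID case). Moreover, the same linear transformation argument shows that if the quasi-L\'evy measure of $Y^{(n)}$ is finite and its Gaussian covariance matrix vanishes, then the same holds for every $UY^{(n)}$: the characteristic triplet of $UY^{(n)}$ has Gaussian part $UC_{Y^{(n)}}U^\top = 0$ and its quasi-L\'evy measure is the pushforward of the (finite, signed) quasi-L\'evy measure of $Y^{(n)}$ under $x\mapsto Ux$, which remains finite on $\mathbb{R}^k\setminus\{0\}$.

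I do not expect a real obstacle here: once Conjecture \ref{T1} and the already-proved Theorem \ref{T-D-inf} are in hand, the corollary reduces to the routine observation above that finite-dimensional projections of a QID vector with finite quasi-L\'evy measure and zero Gaussian variance inherit those three properties. The only mildly delicate point is the preservation of finiteness of the quasi-L\'evy measure under projection, which follows because pushforwards of finite signed measures are finite signed measures.
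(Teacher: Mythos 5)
Your proposal is correct and follows the paper's own route: the paper proves this corollary in one line by invoking Conjecture \ref{T1} and Theorem \ref{T-D-inf}, with the verification that finite-dimensional projections preserve the QID property (and the finite quasi-L\'{e}vy measure, zero Gaussian variance properties) already recorded in the proof of Corollary \ref{T-C-co} via the linear-transformation argument you cite. Your added remark on the finiteness of the pushforward quasi-L\'{e}vy measure is a harmless elaboration of the same point.
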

\begin{proof}
	It follows from Conjecture \ref{T1} and Theorem \ref{T-D-inf}.
\end{proof}
\section*{Acknowledgement}
The author would like to thank Alexander Lindner and David Berger for useful discussions. The research developed in this paper has been supported by the EPSRC (award ref.~1643696) at Imperial College London and by the Fondation Sciences Math\'{e}matiques de Paris (FSMP) fellowship, held at LPSM (Sorbonne University).

\end{document}